\newtheorem{Thm}{Theorem}[section]
\newtheorem{Cor}[Thm]{Corollary}
\newtheorem{Lem}[Thm]{Lemma}
\newtheorem{Prop}[Thm]{Proposition}
\newtheorem{Rem}[Thm]{Remark}
\theoremstyle{definition}
\theoremstyle{remark}
\def \cal{\mathcal}
\def\eps{\varepsilon}
\def\Ndb{\mathbb N}
\def\eps{\varepsilon}
\def\ph{\varphi}
\newcommand{\bib}{\bibitem}
\begin{document}

\title{Equivalent norms with the property $(\beta)$ of Rolewicz}


\author{S.~J. Dilworth}

\address{Department of Mathematics, University of South Carolina,
Columbia, SC 29208, USA.} \email{dilworth@math.sc.edu}

\author{D. Kutzarova}
\address{Institute of Mathematics, Bulgarian Academy of
Sciences, Sofia, Bulgaria.} \curraddr{Department of Mathematics, University of Illinois
at Urbana-Champaign, Urbana, IL 61801, USA.} \email{denka@math.uiuc.edu}

\author{G. Lancien}

\address{Universit\'{e} de Franche-Comt\'{e}, Laboratoire de Math\'{e}matiques UMR 6623,
16 route de Gray, 25030 Besan\c{c}on Cedex, FRANCE.}
\email{gilles.lancien@univ-fcomte.fr}

\author{N.~L. Randrianarivony}

\address{Department of Mathematics and Computer Science, Saint Louis University, St. Louis, MO 63103, USA.}
\email{nrandria@slu.edu}

\subjclass[2010]{46B20, 46B80, 46T99}
\thanks{The first author was supported by the National Science Foundation under Grant Number DMS-1361461 and by the Workshop in Analysis and Probability at Texas A\&M  University  in 2011, 2012, and 2013.}
\thanks{The second author was partially supported by the Bulgarian National
Scientific Fund under Grant DFNI-I02/10.}
\thanks{The third author was partially supported by the Workshop in Analysis and Probability at Texas A\&M  University  in 2011.}
\thanks{The fourth author was supported by the National Science Foundation under Grant Number DMS-1301591 and by the Workshop in Analysis and Probability at Texas A\&M University  in 2012 and 2014.}

\keywords{}

\maketitle

\begin{abstract} We extend to the non separable setting many characterizations of the Banach spaces admitting an equivalent norm with the property $(\beta)$ of Rolewicz. These characterizations involve in particular the Szlenk index and asymptotically uniformly smooth or convex norms. This allows to extend easily to the non separable case some recent results from the non linear geometry of Banach spaces.
\end{abstract}

\markboth{}{}

\section{Introduction and notation}

Banach spaces admitting an equivalent norm with the property $(\beta)$ of Rolewicz recently played an important role in the non linear geometry of Banach spaces. In the separable case it has been proved in \cite{BKL} that this class is stable under coarse Lipschitz embeddings and in \cite{DKR} that it is stable under uniform quotients.

The aim of this paper is to give a complete characterization of the Banach spaces that admit an equivalent norm with the property $(\beta)$ of Rolewicz in the non separable setting. This will in particular extend Theorem 6.3 in \cite{DKLR}. As an easy consequence we will deduce that the above mentioned non linear results are also valid in the non separable case.

\medskip

Let us first define the properties of norms that will be
considered in this paper. For a Banach space $(X,\|\ \|)$ we
denote by $B_X$ the closed unit ball of $X$ and by $S_X$ its unit
sphere.

 We start with a property introduced by S. Rolewicz in \cite{Rolewicz1987too} and  now called property $(\beta)$ of Rolewicz. For its definition, we shall use a characterization due to D. Kutzarova \cite{Kutzarova1991}. An infinite-dimensional Banach space $X$ is said to have property $(\beta)$ if for any $t\in (0,a]$, where the number $1\leq a\leq 2$ depends on the space X,  there exists $\delta>0$ such that for any $x$ in $B_X$ and any $t$-separated sequence $(x_n)_{n=1}^\infty$ in $B_X$, there exists $n\ge 1$ so that
$$\frac{\|x-x_n\|}{2}\le 1-\delta.$$
For a given $t\in (0,a]$, we denote $\overline{\beta}_X(t)$ the supremum of all $\delta\ge 0$ so that the above property is satisfied. It is important to recall that a Banach space with property $(\beta)$ is reflexive (see \cite{Rolewicz1987too}).

The next definitions are due to V. Milman \cite{Milman1971} and we follow the notation from \cite{JohnsonLindenstraussPreissSchechtman2002}. For $t>0$, $x\in S_X$ and $Y$ a closed linear subspace of
$X$, we define
$$\overline{\rho}(t,x,Y)=\sup_{y\in S_Y}\|x+t y\|-1\ \ \ \ {\rm and}\ \ \
\ \overline{\delta}(t,x,Y)=\inf_{y\in S_Y}\|x+t y\|-1.$$ Then
$$\overline{\rho}_X(t)=\sup_{x\in S_X}\ \inf_{{\rm
dim}(X/Y)<\infty}\overline{\rho}(t,x,Y)\ \ \ \ {\rm and}\ \ \ \
\overline{\delta}_X(t)=\inf_{x\in S_X}\ \sup_{{\rm
dim}(X/Y)<\infty}\overline{\delta}(t,x,Y).$$ The norm $\|\ \|$ is said to be
{\it asymptotically uniformly smooth} (in short AUS) if
$$\lim_{t \to 0}\frac{\overline{\rho}_X(t)}{t}=0.$$
It is said to be {\it asymptotically uniformly convex} (in short
AUC) if
$$\forall t>0\ \ \ \ \overline{\delta}_X(t)>0.$$

Similarly, there is in $X^*$ a modulus of weak$^*$
asymptotic uniform convexity defined by
$$ \overline{\delta}_X^*(t)=\inf_{x^*\in S_{X^*}}\sup_{E}\inf_{y^*\in S_E}\{\|x^*+ty^*\|-1\},$$

\noindent where $E$ runs through all weak$^*$-closed subspaces of
$X^*$ of finite codimension.

\noindent The norm of $X^*$ is said to be {\it weak$^*$ uniformly asymptotically convex} (in short w$^*$-AUC) if
$$\forall t>0\ \ \ \ \overline{\delta}_X^*(t)>0.$$

\indent

The Szlenk index is a fundamental object which is, as we will recall, related to the existence of equivalent asymptotically uniformly smooth norms. Let us now
define it. Consider a real Banach space $X$ and $K$ a
weak$^*$-compact subset of $X^*$. For $\eps>0$ we let $\cal V$ be
the set of all relatively weak$^*$-open subsets $V$ of $K$ such
that the norm diameter of $V$ is less than $\eps$ and
$s_{\eps}K=K\setminus \cup\{V:V\in\cal V\}.$ Then we define
inductively $s_{\eps}^{\alpha}K$ for any ordinal $\alpha$ by
$s^{\alpha+1}_{\eps}K=s_{\eps}(s_{\eps}^{\alpha}K)$ and
$s^{\alpha}_{\eps}K={\displaystyle
\cap_{\beta<\alpha}}s_{\eps}^{\beta}K$ if $\alpha$ is a limit
ordinal. We
then define $\text{Sz}(X,\eps)$ to be the least ordinal $\alpha$
so that $s_{\eps}^{\alpha}B_{X^*}=\emptyset,$ if such an ordinal
exists. Otherwise we write $\text{Sz}(X,\eps)=\infty.$ The {\it
Szlenk index} of $X$ is finally defined by
$\text{Sz}(X)=\sup_{\eps>0}\text{Sz}(X,\eps)$. In the sequel
$\omega$ will denote the first infinite ordinal and $\omega_1$ the
first uncountable ordinal.

\section{Preliminary results}

Our first statement contains all the information on the duality between asymptotic uniform smoothness and weak$^*$-asymptotic uniform convexity and is an extension to the non separable setting of Proposition 2.6 in \cite{GKL2001}.

\begin{Prop}\label{Young} Let $X$ be a Banach space and $0<\sigma,\tau<1$.\\
\smallskip
(a) If $\overline{\rho}_X(\sigma)<\sigma\tau$, then  $\overline{\delta}_X^*(6\tau)\ge \sigma\tau$.\\
\smallskip
(b) If $\overline{\delta}_X^*(\tau)> \sigma\tau$, then $\overline{\rho}_X(\sigma)\le\sigma\tau$
\end{Prop}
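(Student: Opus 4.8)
The plan is to prove both implications by a direct computation with the definitions, exploiting the fact that for a subspace $Y$ of finite codimension in $X$ its annihilator $Y^\perp$ is a weak$^*$-closed subspace of finite dimension in $X^*$, while conversely a weak$^*$-closed finite-codimensional $E\subseteq X^*$ is of the form $Y^\perp$ with $\dim(X/Y)<\infty$; this gives the correspondence that links $\overline\rho_X$ (an inf over finite-codimensional $Y\subseteq X$) with $\overline\delta_X^*$ (a sup over finite-codimensional weak$^*$-closed $E\subseteq X^*$). The numerology ($6\tau$ versus $\tau$, and the hypothesis involving the product $\sigma\tau$) is the tell-tale sign of a Young-type duality inequality: one passes from a statement about $\|x+ty\|$ on the sphere of $X$ to a statement about $\|x^*+ty^*\|$ on the sphere of $X^*$ by writing $\|x^*+ty^*\|=\sup\{ \langle x^*+ty^*, z\rangle : z\in B_X\}$ and choosing $z$ cleverly (roughly of the form $z=(x+sy)/\|x+sy\|$ for a well-chosen scale $s$), so that the asymptotic smoothness bound on $X$ forces a lower bound on this supremum.

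For part (a): assume $\overline\rho_X(\sigma)<\sigma\tau$. Fix $x^*\in S_{X^*}$; I must produce a weak$^*$-closed finite-codimensional $E$ with $\inf_{y^*\in S_E}\|x^*+6\tau y^*\|-1\ge\sigma\tau$. Pick $x\in S_X$ with $\langle x^*,x\rangle$ close to $1$. By definition of $\overline\rho_X(\sigma)$ there is a finite-codimensional $Y\subseteq X$ with $\overline\rho(\sigma,x,Y)<\sigma\tau$, i.e. $\|x+\sigma y\|\le 1+\sigma\tau$ for all $y\in S_Y$. Take $E=Y^\perp$, which is weak$^*$-closed of finite codimension. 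Now for $y^*\in S_E$ and any $y\in S_Y$, estimate
\[
\|x^*+6\tau y^*\|\ge \langle x^*+6\tau y^*, \tfrac{x+\sigma y}{\|x+\sigma y\|}\rangle
= \frac{\langle x^*,x\rangle + \sigma\langle x^*,y\rangle + 6\tau\sigma\langle y^*,y\rangle}{\|x+\sigma y\|},
\]
using $\langle y^*,x\rangle=0$ since $x$ may be taken in a complement — actually one must be slightly careful here, so instead I would split $x^* = \phi + \psi$ with $\phi\in Y^\perp$, or simply choose $x$ modulo $Y$ appropriately; the cleanest route is to select $y\in S_Y$ nearly achieving $\langle y^*,y\rangle\approx 1$ (possible since $y^*|_Y$ has norm $1$), note $|\langle x^*,y\rangle|$ can be absorbed into the error terms, and bound the denominator by $1+\sigma\tau$. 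This yields $\|x^*+6\tau y^*\|\ge (1 - \varepsilon + 6\sigma\tau - \varepsilon)/(1+\sigma\tau)$, and a short computation shows the right-hand side is $\ge 1+\sigma\tau$ once the constant $6$ is in place (this is exactly what the $6$ buys: $(1+6\sigma\tau)/(1+\sigma\tau)\ge 1+\sigma\tau$ is equivalent to $6\sigma\tau\ge \sigma\tau+\sigma\tau+\sigma^2\tau^2$, true since $\sigma,\tau<1$). Letting $\varepsilon\to 0$ and taking the inf over $y^*\in S_E$ gives $\overline\delta_X^*(6\tau)\ge\sigma\tau$.

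For part (b): assume $\overline\delta_X^*(\tau)>\sigma\tau$; I want $\overline\rho_X(\sigma)\le\sigma\tau$, i.e. for every $x\in S_X$ and every $\eta>0$ a finite-codimensional $Y$ with $\|x+\sigma y\|\le 1+\sigma\tau+\eta$ for all $y\in S_Y$. The natural move is contrapositive/duality: fix $x\in S_X$, pick $x^*\in S_{X^*}$ norming $x$. Apply the hypothesis to $x^*$: there is a weak$^*$-closed finite-codimensional $E=Y^\perp$ with $\|x^*+\tau y^*\|\ge 1+\sigma\tau$ for all $y^*\in S_E$. Now for $y\in S_Y$ and any $\delta>0$ choose $y^*\in S_{Y^\perp}$ with $\|x^*+\tau y^*\|\le \langle x^*+\tau y^*, w\rangle+\delta$ for a suitable $w$, or rather run the estimate in reverse: suppose toward a contradiction that $\|x+\sigma y\|>1+\sigma\tau+\eta$ for some $y\in S_Y$; use Hahn–Banach to get $z^*\in S_{X^*}$ with $\langle z^*,x+\sigma y\rangle=\|x+\sigma y\|$, then decompose $z^*$ relative to $x^*$ and $Y^\perp$ to manufacture a contradiction with the lower bound on $\|x^*+\tau y^*\|$. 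The computation mirrors part (a) with the roles of $X$ and $X^*$ interchanged, and here one only needs constant $1$ rather than $6$, consistent with the asymmetry in the statement.

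The main obstacle I anticipate is the bookkeeping of the cross terms $\langle x^*,y\rangle$ (in (a)) and the Hahn–Banach decomposition of the norming functional (in (b)): one must make sure that choosing $x$ to nearly norm $x^*$, and $y$ to nearly norm $y^*|_Y$, does not leave an uncontrolled term of size comparable to $\sigma\tau$. The standard fix is that $\langle x^*, y\rangle$ need not be small a priori, so instead of working with a fixed $x$ one should replace $x$ by its ``projection'' away from the relevant directions, or invoke that $\overline\rho(\sigma,x,Y)$ only improves when $Y$ shrinks, allowing one to intersect $Y$ with $\ker$ of finitely many functionals including the troublesome ones — since $Y$ is already finite-codimensional this costs nothing. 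Once that reduction is in place, both parts reduce to the elementary inequality $(1+6\sigma\tau)/(1+\sigma\tau)\ge 1+\sigma\tau$ (resp. its analogue in (b)), which holds for $0<\sigma,\tau<1$.
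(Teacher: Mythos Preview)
Your proposal for part (a) contains a genuine gap stemming from a confusion about annihilators. You correctly note in your plan that if $Y\subset X$ has finite codimension then $Y^\perp\subset X^*$ has finite \emph{dimension}; but two lines later you write ``Take $E=Y^\perp$, which is weak$^*$-closed of finite codimension'', and then assert that for $y^*\in S_E$ one has ``$y^*|_Y$ has norm $1$''. Both claims are false: $Y^\perp$ is finite-\emph{dimensional}, and every $y^*\in Y^\perp$ vanishes on $Y$. So your estimate $\langle y^*,y\rangle\approx 1$ for $y\in S_Y$ is not available, and the whole computation collapses.

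The missing idea is precisely the content of the paper's Lemma~\ref{codim}: given a finite-codimensional $Y\subset X$, one can produce a weak$^*$-closed finite-codimensional $E\subset X^*$ (annihilator of a finite-dimensional subspace of $X$, \emph{not} of $Y$) with the property that every $z^*\in E$ is $(2+\eps)^{-1}$-normed by $B_Y$, and moreover $z^*(x)=0$. This is a Mazur-type construction: one takes a finite $\eta$-net $\{u_1^*,\dots,u_k^*\}$ of $S_{Y^\perp}$, picks nearly-norming $u_j\in S_X$, and sets $E=\bigcap_j\ker(\hat u_j)$. With this $E$ in hand your duality computation goes through, and the factor $1/2$ from the Mazur argument is exactly what the constant $6$ in $6\tau$ is designed to absorb: one ends up with $\|x^*+6\tau z^*\|\ge(1+3\sigma\tau)/(1+\sigma\tau)\ge 1+\sigma\tau$. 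Your sketch of (b) inherits the same confusion about $E=Y^\perp$ and is too vague to evaluate; the paper's argument for (b) is in fact rather different, proceeding by contradiction via a net $(y^*_E)_{E\in\mathcal E}$ indexed by finite-codimensional subspaces, passing to a weak$^*$-convergent subnet, and splitting into two cases according to whether the limit of $\|y^*_\alpha-x^*\|$ is below or above $\tau$.
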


Before to proceed with its proof we need an elementary lemma.

\begin{Lem}\label{codim} Let $X$ be a Banach space and $\eps>0$.\\
If $Y$ is a finite codimensional subspace of $X$, then there exists a weak$^*$-closed finite codimensional subspace $E$ of $X^*$ such that
$$\forall z^*\in E\ \ \sup_{y\in B_Y}|z^*(y)|\ge \frac{1}{2+\eps}\|z^*\|.$$
\end{Lem}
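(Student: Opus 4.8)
The plan is to write $Y$ as the kernel of finitely many functionals and let $E$ be the annihilator of a corresponding finite-dimensional subspace of $X^{**}$, intersected appropriately so that it is weak$^*$-closed. More precisely, since $Y$ has finite codimension, say $n$, choose $x_1^*,\dots,x_n^*\in X^*$ so that $Y=\bigcap_{i=1}^n\ker x_i^*$, and let $F=\operatorname{span}\{x_1^*,\dots,x_n^*\}\subseteq X^*$, a finite-dimensional (hence weak$^*$-closed) subspace. I would then take $E=F_\perp:=\{z^*\in X^*:\ \langle z^*,?\rangle\}$—but this is not quite right since $F$ already lives in $X^*$. Instead, the correct choice is $E=\{z^*\in X^*: z^*|_{Y^{\perp\perp}\cap X}\ \dots\}$; cleaner still: let $E$ be a weak$^*$-closed complement-type subspace obtained as follows. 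Pick a finite-dimensional subspace $G\subseteq X$ with $X=Y\oplus G$ (algebraically, $\dim G=n$), and set $E=G^\perp=\{z^*\in X^*:\ z^*|_G=0\}$. Then $E$ is weak$^*$-closed (annihilator of a subset of $X$) and has codimension $n$ in $X^*$, since the restriction map $E\to G^*$ is... actually $E=G^\perp$ has codimension $\dim G=n$. This is the subspace I will use.

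The key estimate is the bound $\sup_{y\in B_Y}|z^*(y)|\ge \frac{1}{2+\eps}\|z^*\|$ for $z^*\in E=G^\perp$. Fix $z^*\in E$ and $x\in B_X$ with $|z^*(x)|>\frac{1}{1+\eps/2}\|z^*\|$ (such $x$ exists by definition of the norm; I can get as close to $\|z^*\|$ as I like). Write $x=y+g$ with $y\in Y$, $g\in G$. Since $z^*\in G^\perp$, we have $z^*(x)=z^*(y)$, so $|z^*(y)|=|z^*(x)|>\frac{1}{1+\eps/2}\|z^*\|$. The remaining point is to control $\|y\|$: I need $\|y\|\le 2+\eps'$ (roughly) so that $y/\|y\|\in B_Y$ gives the stated bound after rescaling. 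This is where a uniform bound on the projection $P:X\to G$ along $Y$ enters: $\|y\|=\|x-Px\|\le \|x\|+\|P\|\,\|x\|\le 1+\|P\|$. But $\|P\|$ is not universally bounded, so a naive choice of $G$ fails — and this is the main obstacle.

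The fix, and the real content of the lemma, is to choose $G$ (equivalently the splitting $X=Y\oplus G$) so that the associated projection onto $Y$ has norm as close to $1$ as desired, or rather so that the relevant quotient behaves well. The right framework is: the quotient map $q:X\to X/Y$ has $\|q\|=1$, and $X/Y$ is $n$-dimensional; by a standard finite-dimensional argument (e.g.\ an Auerbach-type or small-perturbation argument, or simply approximating an algebraic lifting) one can find, for any $\eta>0$, a lifting $L:X/Y\to X$ with $\|L\|\le 1+\eta$, and then $G=L(X/Y)$, $P=L\circ q$, giving $\|I-P\|\le 2+\eta$ on the relevant vectors; hence for $x\in B_X$ one gets $y=x-Px=(I-P)x$ with $\|y\|\le 2+\eta$. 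Combining with $|z^*(y)|>\frac{1}{1+\eps/2}\|z^*\|$ and choosing $\eta,\eps/2$ appropriately in terms of $\eps$ yields
$$\sup_{y\in B_Y}|z^*(y)|\ \ge\ \frac{|z^*(y)|}{\|y\|}\ >\ \frac{1}{(2+\eta)(1+\eps/2)}\|z^*\|\ \ge\ \frac{1}{2+\eps}\|z^*\|,$$
as required. So the proof reduces to: (1) pick a near-isometric lifting of the finite-dimensional quotient $X/Y$, (2) set $G$ equal to its image and $E=G^\perp$ (weak$^*$-closed, finite-codimensional), (3) run the two-line estimate above. The only genuinely substantive step is (1), the existence of an almost-isometric lifting of a finite-dimensional quotient space, which follows from compactness of the unit sphere of $X/Y$ together with a perturbation argument.
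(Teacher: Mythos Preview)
Your proposal has a genuine gap at step (1): the claimed almost-isometric lifting $L:X/Y\to X$ with $\|L\|\le 1+\eta$ need \emph{not} exist. A linear section $L$ of the quotient map $q$ produces the projection $P=L\circ q$ with $\ker P=Y$, and one checks easily that $\|L\|=\|P\|$; thus you are asserting that every finite-codimensional subspace admits a complementary projection of norm arbitrarily close to $1$. This fails already in three dimensions. Take $X=\ell_\infty^3$ and $Y=\operatorname{span}\{(1,1,1)\}$ (codimension $2$). Any projection with kernel $Y$ has the form $P(x)=x-\psi(x)(1,1,1)$ with $\psi=(a,b,c)\in\ell_1^3$, $a+b+c=1$, and its norm is $\max_i\|P_i\|_1$, which for $a,b,c\ge 0$ equals $\max_i(2-2a_i)\ge 4/3$, with the minimum $4/3$ attained at $a=b=c=1/3$. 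So $\inf\|L\|=4/3$, not $1$. The ``compactness of $S_{X/Y}$ plus perturbation'' heuristic only shows that each \emph{individual} point of $S_{X/Y}$ lifts well, not that a single linear map does so simultaneously; your Auerbach argument in fact only yields $\|L\|\le n(1+\eta)$.

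What you actually need for your estimate is the weaker bound $\|I-P\|\le 2+\eps$ (the projection \emph{onto} $Y$), and even that is not obviously available uniformly in the codimension via your construction. The paper avoids this difficulty entirely and does not split $X=Y\oplus G$ at all. It observes that $\sup_{y\in B_Y}|z^*(y)|=d(z^*,F)$ where $F=Y^\perp=\operatorname{span}\{x_1^*,\dots,x_n^*\}$, then uses the standard Mazur device: take an $\eta$-net $\{u_1^*,\dots,u_k^*\}$ of $S_F$, choose $u_j\in S_X$ with $u_j^*(u_j)\ge 1-\eta$, and set $E=\bigcap_{j=1}^k\ker u_j$. For $z^*\in E$ and $f^*\in F$ one gets $\|z^*-f^*\|\ge|f^*(u_j)|\ge(1-2\eta)\|f^*\|$ for a suitable $j$, whence $\|z^*\|\le\|z^*-f^*\|+\|f^*\|\le\frac{2-2\eta}{1-2\eta}\,d(z^*,F)$. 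Note that this $E$ may have codimension $k>n$; that extra room is exactly what makes the constant $2+\eps$ achievable without any control on projections.
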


\begin{proof} We can write $Y=\bigcap_{i=1}^n{\rm Ker}\,x_i^* $, with $x^*_i\in X^*$. Let $F$ be the linear span of $\{x_1^*,..,x_n^*\}$. Note that $Y^\perp=F$ and $Y^*$ is canonically isometric to $X^*/F$. In other words, for any $x^*\in X^*,\ \sup_{y\in B_Y}|x^*(y)|=d(x^*,F)$. We now apply the standard Mazur technique as follows. Pick a $\eta$-net $\{u_1^*,..,u_k^*\}$ of $S_F$ and $u_i\in S_X$ so that $u_i^*(u_i)\ge 1-\eta$. If $\eta>0$ is chosen small enough then we can take $E=\bigcap_{j=1}^k{\rm Ker}\,u_j$.
\end{proof}

\begin{proof}[Proof of Proposition \ref{Young}] (a) Assume that $\overline{\rho}_X(\sigma)<\sigma\tau$ and fix $\eps>0$. Let $x^*\in S_{X^*}$ and choose $x\in S_X$ such that $x^*(x)\ge 1-\eps$. Then there exists a finite codimensional subspace $Y$ of $X$ such that
$$\forall y\in S_Y\ \ \|x+\sigma y\|<1+\sigma\tau.$$
We may also assume that $Y\subset {\rm Ker}(x^*)$. \\
Then, it follows from Lemma \ref{codim} that there exists a weak$^*$-closed finite codimensional subspace $E$ of $X^*$ such that
$$\forall z^*\in E\ \ \sup_{y\in B_Y}|z^*(y)|\ge \frac{1}{2+\eps}\|z^*\|\ \ {\rm and}\ \ z^*(x)=0.$$
Therefore
$$\forall y\in S_Y\ \ \forall z^*\in S_E\ \ \|x^*+6\tau z^*\|\ge \frac{1}{1+\sigma\tau}\langle x^*+6\sigma\tau z^*, x+\sigma y\rangle.$$
Taking the supremum over $y\in S_Y$, we get that
$$\forall z^*\in S_E\ \ \|x^*+6\tau z^*\|\ge \frac{1}{1+\sigma\tau}\Big(1-\eps+\frac{6\sigma\tau}{2+\eps}\Big).$$
Finally, letting $\eps$ tend to $0$, we conclude that
$$\overline{\delta}_X^*(6\tau)\ge \frac{1+3\sigma\tau}{1+\sigma\tau}-1\ge \sigma\tau.$$

\medskip

(b) Assume that $\overline{\delta}_X^*(\tau)> \sigma\tau$. We denote by $\cal E$ the set of all finite codimensional  subspaces of $X$. For $E,F\in \cal E$, we note $E\preceq F$ if $F\subseteq E$. Then $(E,\preceq)$ is a directed set. We shall prove our result by contradiction and therefore assume also that for some $x\in S_X$, $\overline{\rho}_X(\sigma,x)>\sigma\tau$. So we can pick $\rho$ such that $\sigma\tau <\rho < \overline{\rho}_X(\sigma,x)$ and $\overline{\delta}_X^*(\tau)> \rho$. Then
$$\forall E\in \cal E\ \ \exists x_E\in S_E\ \ \|x+\sigma x_E\|>1+\rho.$$
Thus, for any $E\in \cal E$, we can pick $y^*_E\in S_{X^*}$ such that $\langle x+\sigma x_E,y^*_E\rangle >1+\rho$. Using the weak$^*$-compactness of $B_{X^*}$ and the boundedness of $(y^*_E)$, we can find a subnet $(y^*_\alpha)_{\alpha \in A}$ of the net $(y^*_E)_{E\in \cal E}$, $x^*\in B_{X^*}$ and $c\ge0$ such that $(y^*_\alpha)_{\alpha \in A}$ is weak$^*$-converging to $x^*$ and $(\|x^*_\alpha\|)_{\alpha \in A}$ converges to $c$, where $x^*_\alpha=y^*_\alpha-x^*$.

\smallskip
(i) Assume first that $c<\tau$ and fix $\eps>0$.
$$\exists \alpha_0\in A\ \ \forall \alpha\ge \alpha_0\ \ \langle x_\alpha,x^*\rangle=0,\ |\langle x,x^*_\alpha\rangle|<\eps\ {\rm and}\ \|x^*_\alpha\|<c+\eps.$$
Then, for all $\alpha\ge \alpha_0$
$$\langle x+\sigma x_\alpha, x^*+x^*_\alpha \rangle = \langle x,x^*\rangle + \langle x,x^*_\alpha\rangle + \sigma \langle x_\alpha,x^*_\alpha\rangle  \le 1+\eps+\sigma(c+\eps).$$
Since $c<\tau$ and $\sigma\tau <\rho$, for $\eps$ initially chosen small enough this implies that
$$\forall \alpha\ge \alpha_0\ \ \langle x+\sigma x_\alpha, x^*+x^*_\alpha \rangle =\langle x+\sigma x_\alpha, y^*_\alpha\rangle <1+\rho,$$
which is impossible.

\smallskip
(ii) Assume now that $c\ge \tau$ and fix $\eps>0$. We will first show that $\|x^*\|\le 1-\sigma c$, thus we may assume that $x^*\neq 0$. Recall that $\overline{\delta}_X^*(\tau)> \rho$. So, there exists a weak$^*$-closed finite codimensional subspace $F$ of $X^*$ such that
$$\forall y^*\in S_F\ \  \Big\|x^*+\tau\|x^*\|\,y^*\Big\|\ge (1+\rho)\|x^*\|.$$
Since $(x^*_\alpha)_{\alpha \in A}$ is weak$^*$-converging to $0$ and $(\|x^*_\alpha\|)_{\alpha \in A}$ converges to $c$, we have that $d(x^*_\alpha,cS_E)$ tends to $0$. Therefore, we deduce
$$\exists \alpha_1\in A\ \ \forall \alpha\ge \alpha_1\ \ \Big\|x^*+\tau c^{-1}\|x^*\|\,x_\alpha^*\Big\|\ge (1+\rho)\|x^*\|-\eps.$$
Note that $\lambda=\tau c^{-1}\|x^*\|\in [0,1]$ and that we can write $$x^*+\lambda x^*_\alpha=\lambda(x^*+x^*_\alpha)+(1-\lambda)x^*.$$
Using the convexity of the norm we deduce that
$$\|x^*+\lambda x^*_\alpha\|\le \lambda\|x^*+x^*_\alpha\|+(1-\lambda)\|x^*\|=\lambda+(1-\lambda)\|x^*\|.$$
Therefore
$$(1+\rho)\|x^*\|-\eps \le \tau c^{-1}\|x^*\|+(1-\tau c^{-1}\|x^*\|)\|x^*\|.$$
Letting $\rho$ tend to $\sigma\tau$ and $\eps$ tend to $0$, we get that $$(1+\sigma\tau) \|x^*\|\le \tau c^{-1}\|x^*\|+(1-\tau c^{-1}\|x^*\|)\|x^*\|.$$
Dividing by $\|x^*\|$, we obtain
$$\sigma\tau \le \tau c^{-1}(1-\|x^*\|)\ \ {\rm and}\ \ \|x^*\|\le 1-\sigma c.$$
Pick now $\alpha_2\in A$ so that for all $\alpha\in A$, $\alpha\ge \alpha_2$, we have $\langle x_\alpha,x^*\rangle=0$, $|\langle x,x^*_\alpha\rangle|<\eps$ and $\|x^*_\alpha\|\le c+\eps$. Then
$$\forall \alpha\ge \alpha_2\ \ \langle x+\sigma x_\alpha,x^*+x^*_\alpha\rangle \le 1-\sigma c +\eps+\sigma (c+\eps)<1+\rho,$$
for $\eps$ small enough, which is again a contradiction.

\end{proof}

Proposition \ref{Young} can be rephrased in terms of the Young's duality between $\overline{\rho}_X$ and $\overline{\delta}^*_{X}$. Recall that for $f$ continuous
monotone increasing on $[0,1]$ with $f(0)=0$, its dual Young function is defined by
$$ f^*(s)=\sup\{st-f(t):\ 0\le t\le 1\}.$$
Let $C\ge 1$ and $f,g$ be continuous
monotone increasing functions on $[0,1]$ satisfying $f(0)=g(0)=0.$
We will say that $f,g$ are $C$-equivalent if $f(t)\ge g(t/C)$ and $g(t)\ge f(t/C)$ for all $t\in [0,1]$. Then we can state.

\begin{Cor}\label{Young2} There is a universal constant $C\ge 1$ such that for any Banach space $X$ we have that $(\overline{\rho}_X)^*$ and $\overline{\delta}^*_{X}$ are $C$-equivalent.
\end{Cor}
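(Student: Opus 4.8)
The plan is to read the Corollary directly off the two implications of Proposition~\ref{Young}, using in addition only a handful of elementary properties of the moduli. Write $\rho=\overline{\rho}_X$, $\delta^*=\overline{\delta}^*_X$, and $\rho^*=(\overline{\rho}_X)^*$ for the dual Young function of $\rho$. We shall use the following facts, all immediate: $\rho$ is nonnegative, $1$-Lipschitz (hence continuous) on $[0,1]$ with $\rho(0)=0$ (the Lipschitz bound because $\big|\,\|x+ty\|-\|x+t'y\|\,\big|\le|t-t'|$ uniformly in $x,y$, and $\rho\ge0$ because $\max(\|x+ty\|,\|x-ty\|)\ge\|x\|$); $\delta^*\ge0$ with $\delta^*(0)=0$ and $\delta^*(t)\le t$ (from $\|x^*+ty^*\|-1\le t$ for $x^*,y^*\in S_{X^*}$); and $\rho^*\ge0$ (since $\rho^*(s)\ge s\cdot0-\rho(0)=0$). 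We will establish the two estimates
\[
\rho^*(s)\ \ge\ \delta^*(s/6)\qquad\text{and}\qquad \delta^*(s)\ \ge\ \rho^*(s/6)\qquad(0<s\le1),
\]
which together say precisely that $\rho^*$ and $\delta^*$ are $6$-equivalent; so the universal constant can be taken to be $C=6$ (or any larger one).

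For the first estimate, fix $s\in(0,1]$ and set $\tau=s/6\in(0,1)$. If $\delta^*(\tau)=0$ the inequality is immediate since $\rho^*\ge0$, so assume $\delta^*(\tau)>0$ and pick $\eps\in(0,\delta^*(\tau))$. Put $\sigma=(\delta^*(\tau)-\eps)/\tau$; then $0<\sigma<\delta^*(\tau)/\tau\le1$, and $\delta^*(\tau)>\sigma\tau$, so Proposition~\ref{Young}(b) gives $\rho(\sigma)\le\sigma\tau$. Hence, straight from the definition of the Young dual,
\[
\rho^*(s)\ \ge\ s\sigma-\rho(\sigma)\ \ge\ s\sigma-\sigma\tau\ =\ \sigma(s-\tau)\ =\ \tfrac{5s}{6}\,\sigma\ =\ 5\bigl(\delta^*(s/6)-\eps\bigr),
\]
and letting $\eps\to0$ yields $\rho^*(s)\ge5\,\delta^*(s/6)\ge\delta^*(s/6)$ (here the last step uses $\delta^*\ge0$).

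For the second estimate, fix $s\in(0,1]$, set $\tau=s/6\in(0,1)$, and let $\gamma=\rho^*(s/6)=\sup_{0\le t\le1}\bigl(\tfrac{s}{6}t-\rho(t)\bigr)$; we may assume $\gamma>0$. Fix $\eps\in(0,\gamma)$. The function $t\mapsto\tfrac{s}{6}t-\rho(t)$ is continuous on $[0,1]$ and vanishes at $t=0$, so the set where it exceeds $\gamma-\eps$ is relatively open, nonempty, and avoids $0$, hence contains some $t_0\in(0,1)$ (if it contained $1$ it would contain nearby points of $(0,1)$). For that $t_0$ we get simultaneously $\rho(t_0)<\tfrac{s}{6}t_0=t_0\tau$ (since $\gamma-\eps>0$) and $t_0\tau=\tfrac{s}{6}t_0\ge\tfrac{s}{6}t_0-\rho(t_0)>\gamma-\eps$ (since $\rho\ge0$). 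Applying Proposition~\ref{Young}(a) with $\sigma=t_0$ and this $\tau$ gives $\delta^*(s)=\delta^*(6\tau)\ge\sigma\tau=t_0\tau>\gamma-\eps$, and letting $\eps\to0$ finishes the proof.

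There is no essential difficulty beyond Proposition~\ref{Young} itself. The points that need a little care are: choosing the auxiliary parameters well (taking $\tau=s/6$ in \emph{both} estimates is engineered so that only the Lipschitz/positivity information on $\rho$ and the bound $\delta^*(t)\le t$ are needed, with no monotonicity of $\delta^*$ or $\rho$); checking that these choices satisfy the constraint $\sigma,\tau\in(0,1)$ required to invoke Proposition~\ref{Young}; and the minor technicality that a near-maximiser $t_0$ for the Young dual can be chosen in the open interval $(0,1)$, which is the one place where continuity of $\rho$ is used.
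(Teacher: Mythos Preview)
Your proof is correct and follows exactly the route the paper intends: the paper does not give an explicit argument for the Corollary but simply states that Proposition~\ref{Young} ``can be rephrased in terms of the Young's duality,'' and your two paragraphs carry out precisely that rephrasing, yielding the explicit constant $C=6$. The only details you add beyond the Proposition are the elementary bounds $\rho\ge0$, $\delta^*(t)\le t$, $\delta^*\ge0$, and the continuity of $\rho$ (to locate a near-maximiser $t_0\in(0,1)$), all of which are indeed immediate as you claim; the verification that your choices of $\sigma,\tau$ lie in $(0,1)$ is carefully done.
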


\medskip

The following result is now an immediate consequence. Its proof in the separable case can be found for instance in \cite{D}.

\begin{Cor}\label{duality} Let $X$ be a Banach space.\\
Then $\|\ \|_X$ is
AUS if and only if $\|\ \|_{X^*}$ is
weak$^*$-AUC.\\
In particular, if $X$ is reflexive, then $\|\ \|_X$ is
AUS if and only if $\|\ \|_{X^*}$ is AUC.
\end{Cor}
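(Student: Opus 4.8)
The plan is to obtain the first equivalence directly from Proposition~\ref{Young} (equivalently, from Corollary~\ref{Young2}) and then to deduce the reflexive statement by identifying the weak$^*$ and weak topologies on $X^*$. Suppose first that $\|\ \|_X$ is AUS, i.e.\ $\overline{\rho}_X(\sigma)/\sigma\to 0$ as $\sigma\to 0$, and fix $t>0$; I want to show $\overline{\delta}^*_X(t)>0$. Pick $\tau\in(0,1)$ with $6\tau\le t$ and then $\sigma\in(0,1)$ small enough that $\overline{\rho}_X(\sigma)<\sigma\tau$. Part (a) of Proposition~\ref{Young} gives $\overline{\delta}^*_X(6\tau)\ge\sigma\tau>0$, and since $t\mapsto\overline{\delta}^*_X(t)$ is non-decreasing we get $\overline{\delta}^*_X(t)\ge\overline{\delta}^*_X(6\tau)>0$. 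Hence $\|\ \|_{X^*}$ is w$^*$-AUC.

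Conversely, assume $\|\ \|_{X^*}$ is w$^*$-AUC and fix $\tau\in(0,1)$, so $\overline{\delta}^*_X(\tau)>0$. For every $\sigma\in(0,1)$ with $\sigma\tau<\overline{\delta}^*_X(\tau)$, part (b) of Proposition~\ref{Young} yields $\overline{\rho}_X(\sigma)\le\sigma\tau$; in particular $\overline{\rho}_X(\sigma)/\sigma\le\tau$ for all sufficiently small $\sigma$, so $\limsup_{\sigma\to 0}\overline{\rho}_X(\sigma)/\sigma\le\tau$. Letting $\tau\to 0$ gives $\overline{\rho}_X(\sigma)/\sigma\to 0$, i.e.\ $\|\ \|_X$ is AUS. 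One could equally argue from Corollary~\ref{Young2}: since $s\mapsto\overline{\rho}_X(s)/s$ is non-decreasing on $(0,1]$, $\|\ \|_X$ is AUS if and only if $(\overline{\rho}_X)^*(s)>0$ for every $s\in(0,1]$ (if the ratio tends to $0$ then $(\overline{\rho}_X)^*(s)\ge st-\overline{\rho}_X(t)>0$ for suitable small $t$, while if it is bounded below by $L>0$ then $(\overline{\rho}_X)^*(s)\le\sup_{0\le t\le1}(s-L)t=0$ for $s<L$), and this positivity is preserved by $C$-equivalence, hence is equivalent to $\overline{\delta}^*_X(s)>0$ for all $s>0$.

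Finally, if $X$ is reflexive then the weak$^*$ topology of $X^*$ coincides with its weak topology, so the weak$^*$-closed finite-codimensional subspaces of $X^*$ are precisely its norm-closed finite-codimensional subspaces; comparing the two definitions, $\overline{\delta}^*_X=\overline{\delta}_{X^*}$, so w$^*$-AUC of $X^*$ is the same as AUC of $X^*$, and the last assertion follows from the first. The only points needing a word of care are the monotonicity of $t\mapsto\overline{\delta}^*_X(t)$, which follows from the convexity of the norm just as in the familiar argument that $s\mapsto\overline{\rho}_X(s)/s$ is non-decreasing, and the restriction $\sigma,\tau\in(0,1)$ in Proposition~\ref{Young}, which is harmless because AUS and w$^*$-AUC concern arbitrarily small parameters. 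I do not expect a genuine obstacle: granting Proposition~\ref{Young}, the corollary is essentially bookkeeping.
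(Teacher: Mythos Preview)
Your proof is correct and follows exactly the route the paper intends: the paper states that the corollary is an immediate consequence of Proposition~\ref{Young} (or Corollary~\ref{Young2}) and gives no further details, so you have simply written out the bookkeeping behind that sentence. Your identification $\overline{\delta}^*_X=\overline{\delta}_{X^*}$ in the reflexive case is the right way to handle the last assertion.
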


\indent

We now turn to the links between the Szlenk index and the existence of equivalent AUS renormings. The fundamental result on AUS renormings is due to Knaust, Odell and Schlumprecht in the separable case \cite{KOS} (see also \cite{OS}) and to Raja in the general case \cite{Ra}. It is the following.

\begin{Thm}\label{AUS} Let $X$ be a Banach space.\\
Then $X$ is AUS renormable if and only if $Sz(X)\le \omega$. In that case there exists $a,b>0$, $p\in (1,\infty)$ and and equivalent norm $|\ |$ on $X$ such that $\overline{\rho}_{|\ |}(t)\le at^p$ and $\overline{\delta}^*_{|\ |}(t)\ge bt^q$, where $q$ is the conjugate exponent of $p$.
\end{Thm}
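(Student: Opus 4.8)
The plan is to prove the two implications separately, the quantitative ``power-type'' conclusion being part of the sufficiency half.

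\medskip
\noindent\emph{Necessity of $\text{Sz}(X)\le\omega$.} If $X$ carries an equivalent AUS norm $\|\ \|$, then by Corollary~\ref{duality} its dual norm is w$^*$-AUC, so $\overline{\delta}_X^*(t)>0$ for every $t>0$. The one nontrivial ingredient is the estimate
$$ s_\varepsilon(B_{X^*})\ \subseteq\ \bigl(1-\overline{\delta}_X^*(\varepsilon/2)\bigr)\,B_{X^*},\qquad 0<\varepsilon<1 . $$
I would prove it from the net description of the Szlenk derivation: $x^*\in s_\varepsilon(B_{X^*})$ exactly when there is a net $\bigl((y_\alpha^*,z_\alpha^*)\bigr)$ in $B_{X^*}\times B_{X^*}$ with $y_\alpha^*,z_\alpha^*\xrightarrow{w^*}x^*$ and $\liminf_\alpha\|y_\alpha^*-z_\alpha^*\|\ge\varepsilon$. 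Writing $y_\alpha^*=x^*+a_\alpha^*$ and passing to a subnet, one may assume $\|a_\alpha^*\|\to c\ge\varepsilon/2$ and $a_\alpha^*\xrightarrow{w^*}0$; since $\|a_\alpha^*\|$ stays bounded away from $0$, the normalised vectors $a_\alpha^*/\|a_\alpha^*\|$ are eventually within norm distance $o(1)$ of any prescribed w$^*$-closed finite codimensional subspace of $X^*$. Feeding this into the homogenised definition of $\overline{\delta}_X^*$ at $x^*/\|x^*\|$, together with the convexity of the norm, gives $\|y_\alpha^*\|=\|x^*+a_\alpha^*\|\ge\|x^*\|+\overline{\delta}_X^*(\varepsilon/2)-o(1)$, and $\|y_\alpha^*\|\le1$ forces the displayed inclusion. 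Iterating (using that the Szlenk derivation is monotone and commutes with dilations, and that $\overline{\delta}_X^*$ is nondecreasing) yields $s_\varepsilon^n(B_{X^*})\subseteq\bigl(1-\overline{\delta}_X^*(\varepsilon/2)\bigr)^nB_{X^*}$ for all $n$, so for $n$ large $s_\varepsilon^n(B_{X^*})$ has norm diameter below $\varepsilon$ and $s_\varepsilon^{n+1}(B_{X^*})=\emptyset$. Hence $\text{Sz}(X,\varepsilon)<\omega$ for every $\varepsilon>0$ and $\text{Sz}(X)\le\omega$.

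\medskip
\noindent\emph{Sufficiency and power type.} Assume $\text{Sz}(X)\le\omega$, so $\text{Sz}(X,\varepsilon)$ is a finite ordinal for each $\varepsilon>0$. First I would upgrade this to a power-type bound: by submultiplicativity of the Szlenk derivations one has $\text{Sz}(X,2^{-k})\le N^k$ with $N:=\text{Sz}(X,1/2)<\omega$, hence $\text{Sz}(X,\varepsilon)\le C\varepsilon^{-p_0}$ for suitable $C\ge1$, $p_0>0$. Then, choosing $q\in(1,\infty)$ large enough in terms of $p_0$ and letting $p$ be its conjugate exponent, I would build an equivalent norm $|\ |_*$ on $X^*$ whose unit ball is a weak$^*$-closed convex body squeezed between two multiples of $B_{X^*}$, obtained as a weighted combination of the weak$^*$-closed convex hulls of the iterated derivations $s_{2^{-k}}^{\,j}(B_{X^*})$ --- performing $\le C2^{kp_0}$ derivations at dyadic scale $2^{-k}$, arranged so that one Szlenk derivation of this body at scale $2^{-k}$ shrinks its radius by at least a positive constant times $2^{-kq}$. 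This produces a gauge $|\ |_*$ equivalent to $\|\ \|_{X^*}$, weak$^*$-lower semicontinuous (being the gauge of a weak$^*$-closed convex body), and with $\overline{\delta}^*_{|\ |}(t)\ge bt^q$ for some $b>0$; this is precisely the renorming of \cite{KOS} in the separable case and \cite{Ra} in general, which I would carry over. Since $|\ |_*$ is weak$^*$-lower semicontinuous it is a dual norm, $|\ |_*=|\ |^*$ for an equivalent norm $|\ |$ on $X$; Corollary~\ref{duality} then shows $|\ |$ is AUS, and Proposition~\ref{Young}(b) converts $\overline{\delta}^*_{|\ |}(t)\ge bt^q$ into $\overline{\rho}_{|\ |}(t)\le at^p$ (for each small $\sigma$ apply it with $\tau$ slightly above $(\sigma/b)^{1/(q-1)}$, so that $\overline{\delta}^*_{|\ |}(\tau)>\sigma\tau$ and $\overline{\rho}_{|\ |}(\sigma)\le\sigma\tau\lesssim\sigma^p$, using $q/(q-1)=p$).

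\medskip
\noindent\emph{Main obstacle.} The crux, and the step I expect to be genuinely hard, is the renorming in the previous paragraph: arranging that the weak$^*$-closed convex body has a gauge with a true power-type w$^*$-AUC modulus requires a delicate quantitative choice of how many Szlenk derivations to perform at each dyadic scale and a careful estimate of how the gauge behaves along weak$^*$-null perturbations. This is the technical content of \cite{KOS,OS,Ra}. By comparison, the necessity direction and the passage from $\text{Sz}(X)\le\omega$ to a power-type Szlenk index are routine.
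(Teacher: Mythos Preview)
The paper does not prove this theorem: it is quoted as a known result, with \cite{KOS} (separable case) and \cite{Ra} (general case) supplying the equivalence and the power-type AUS estimate $\overline\rho_{|\ |}(t)\le at^p$, and the paper's only contribution is the one-line remark that the estimate on $\overline\delta^*_{|\ |}$ then follows from Corollary~\ref{Young2}. Your sketch is therefore far more detailed than the paper's treatment, and it is essentially correct. The necessity argument via the inclusion $s_\varepsilon(B_{X^*})\subseteq(1-\overline\delta_X^*(\varepsilon/2))B_{X^*}$ is the standard one (and you are right to use nets rather than sequences, which is what makes it work nonseparably); you also correctly identify the renorming construction as the genuine content of \cite{KOS,Ra} rather than something to be reproduced here.

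The only noteworthy difference is directional: the paper derives the $\overline\delta^*$ power-type bound from the $\overline\rho$ bound via Corollary~\ref{Young2}, whereas you build a power-type w$^*$-AUC dual norm first (as a gauge of a w$^*$-closed convex body assembled from iterated Szlenk derivations) and then extract the AUS power type from Proposition~\ref{Young}(b). Both orders are legitimate and lead to conjugate exponents; your description is in fact closer to Raja's argument in \cite{Ra}, which produces a w$^*$-UKK dual body, while \cite{KOS} works more directly on the primal modulus. Either way, Young duality bridges the two estimates, exactly as you and the paper both use it.
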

The original result in \cite{KOS} or \cite{Ra} are about $\overline{\rho}_{|\ |}$. The estimate on $\overline{\delta}^*_{X}$ can be deduced from Corollary \ref{Young2}.

\medskip

As an immediate consequence of the two previous statements we have.

\begin{Cor}\label{AUCreflexive}
Let $X$ be a reflexive Banach space. Then the following are equivalent.\\
\smallskip
(a) $Sz(X^*)\le \omega$.\\
\smallskip
(b) There exists $a>0$, $p\in (1,\infty)$ and and equivalent norm $|\ |$ on $X^*$ such that $\overline{\rho}_{|\ |}(t)\le at^p$ (we say that $|\ |$ is AUS with power type $p$).\\
\smallskip
(c) There exists $b>0$, $q\in (1,\infty)$ and and equivalent norm $|\ |$ on $X$ such that $\overline{\delta}_{|\ |}(t)\ge bt^q$ (we say that $|\ |$ is AUC with power type $q$).
\end{Cor}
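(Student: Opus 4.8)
The plan is to show $(a)\Leftrightarrow(b)$ directly from Theorem \ref{AUS}, and then to obtain $(b)\Leftrightarrow(c)$ by dualizing via Proposition \ref{Young} (equivalently Corollary \ref{Young2}) and using reflexivity to pass from w$^*$-AUC on $X$ back to AUC on $X$.

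First, the equivalence $(a)\Leftrightarrow(b)$. Applying Theorem \ref{AUS} to the space $X^*$: if $Sz(X^*)\le\omega$, then $X^*$ admits an equivalent norm $|\ |$ with $\overline{\rho}_{|\ |}(t)\le at^p$ for some $a>0$ and $p\in(1,\infty)$, which is exactly $(b)$. Conversely, if $(b)$ holds then $|\ |$ is AUS, so in particular $X^*$ is AUS renormable, and Theorem \ref{AUS} gives $Sz(X^*)\le\omega$. No reflexivity is needed here.

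Next, $(b)\Leftrightarrow(c)$. Suppose $(b)$: let $|\ |$ be an equivalent norm on $X^*$ with $\overline{\rho}_{|\ |}(t)\le at^p$. Since $X$ is reflexive, $X^*$ is reflexive, so the norm $|\ |$ on $X^*$ is the dual of an equivalent norm $\|\ \|_0$ on $X$ (namely the predual norm, using $X^{**}=X$). Apply Proposition \ref{Young}(a) with the roles of the spaces arranged so that $\overline{\rho}$ on $X^*$ controls $\overline{\delta}^*$ on $X^{**}=X$: from $\overline{\rho}_{|\ |}(\sigma)\le a\sigma^p < \sigma\tau$ whenever $\tau > a\sigma^{p-1}$, we get $\overline{\delta}^*_{(X^*)^*}(6\tau)\ge\sigma\tau$; optimizing the relation $\tau\approx a\sigma^{p-1}$ yields a power-type estimate $\overline{\delta}^*_{\|\ \|_0}(t)\ge b't^q$ on $X^{**}=X$ with $q=p/(p-1)$ the conjugate exponent (here $\overline{\delta}^*$ of the reflexive space $X$ is computed over weak$^*$-closed, i.e. all, finite-codimensional subspaces of $X^{**}=X$, so it coincides with $\overline{\delta}_X$). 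Thus $\|\ \|_0$ is AUC with power type $q$, giving $(c)$. The converse $(c)\Rightarrow(b)$ is symmetric: given $\overline{\delta}_{|\ |}(t)\ge bt^q$ on $X$, reflexivity identifies this AUC norm's modulus with $\overline{\delta}^*$, and Proposition \ref{Young}(b) converts a power-type lower bound on $\overline{\delta}^*_X$ into a power-type upper bound $\overline{\rho}_{X^*}(\sigma)\le\sigma\tau$ on the dual norm, i.e. an AUS norm of power type $p$ on $X^*$; alternatively one simply cites Corollary \ref{Young2} to pass between the Young-dual pair $(\overline{\rho}_{X^*})^*$ and $\overline{\delta}^*_{X^*}$, noting that by reflexivity $\overline{\delta}^*$ of $X^*$ is $\overline{\delta}$ of $X$ up to the canonical identification.

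The main obstacle I anticipate is purely bookkeeping rather than conceptual: one must be careful that $\overline{\delta}^*$ is defined via weak$^*$-closed finite-codimensional subspaces of the dual, and check that under reflexivity every finite-codimensional subspace of $X = X^{**}$ is automatically weak$^*$-closed, so that $\overline{\delta}^*$ computed on the bidual genuinely equals $\overline{\delta}_X$; and one must track the conjugate-exponent relationship and the universal constants through Proposition \ref{Young} (or absorb them into Corollary \ref{Young2}) so that a genuine \emph{power-type} estimate survives the duality, not merely an AUS/AUC dichotomy. Once these identifications are in place the equivalences are immediate from the quoted results.
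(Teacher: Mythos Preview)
Your proposal is correct and is essentially the paper's approach. The paper presents this corollary as an immediate consequence of Theorem~\ref{AUS} (applied to $X^*$, which already packages the $\overline{\delta}^*$ power-type estimate coming from Corollary~\ref{Young2}) together with Corollary~\ref{duality}; you do the same for $(a)\Leftrightarrow(b)$ and, for $(b)\Leftrightarrow(c)$, simply unpack that packaging by invoking Proposition~\ref{Young}/Corollary~\ref{Young2} directly and making explicit the reflexivity bookkeeping (every finite-codimensional closed subspace of $X=X^{**}$ is weak$^*$-closed, so $\overline{\delta}^*_{X^*}=\overline{\delta}_X$), which is precisely the content behind Corollary~\ref{duality} in the reflexive case.
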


\section{Szlenk index and subspaces or quotients with a basis}

In this section we show that the Szlenk index of a reflexive Banach space is determined by its subspaces or quotients with a basis, when it is countable. For subspaces, it is a refinement of Proposition 3.1 in \cite{La1}. The result for quotients was given by Proposition 3.5 in \cite{La1}.

\begin{Prop}\label{basicdetermination} Let $X$ be a reflexive Banach space and let $\alpha <\omega_1$ so that $Sz(X)>\alpha$. Then\\
\smallskip
(i) There is a subspace $Y$ of $X$ with a basis such that $Sz(Y)>\alpha$.\\
\smallskip
(ii) There exists a subspace $Z$ of $X$ such that $X/Z$ has a basis and $Sz(X/Z)>\alpha$.
\end{Prop}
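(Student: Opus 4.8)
The plan is to build the desired subspace (resp. quotient) by a transfinite exhaustion argument that tracks the Szlenk derivations on $B_{X^*}$ while simultaneously keeping everything inside a separable, hence basis-admitting, piece. Since $Sz(X)>\alpha$, there is an $\varepsilon>0$ with $s_\varepsilon^\alpha B_{X^*}\neq\emptyset$; fix such an $\varepsilon$. The key point is that each Szlenk derivation is a "local" operation: a point $x^*$ survives $s_\varepsilon$ relative to a set $K$ precisely when every relatively weak$^*$-open neighbourhood of $x^*$ in $K$ has norm-diameter $\geq\varepsilon$, and this is witnessed by countably many points. So I would recursively choose, for each ordinal $\beta\le\alpha$, a countable subset $D_\beta\subseteq s_\varepsilon^\beta B_{X^*}$ together with witnessing data: whenever $x^*\in D_{\beta+1}$, then $x^*\in s_\varepsilon(\overline{D_\beta}^{w^*})$, and this is arranged by adding, for each basic weak$^*$-neighbourhood $V$ of $x^*$ (coming from a fixed countable family of finite subsets of a countable predual-norming set we are building), a pair of points of $D_\beta\cap V$ at norm-distance $\geq\varepsilon-\text{(something small)}$. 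At limit stages take unions. Because $\alpha<\omega_1$ and at each step we add only countably many new points and only countably many new "coordinate" functionals needed to define the relevant weak$^*$-neighbourhoods, the whole construction stays countable.

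For part (i): let $Y$ be the closed linear span in $X$ of the countable set of functionals/vectors dual to the $D_\beta$'s — more precisely, let $N\subseteq X^*$ be the closed separable subspace generated by $\bigcup_{\beta\le\alpha}D_\beta$, and let $Y$ be a separable subspace of $X$ that $1+\eta$-norms $N$ (obtained by a Mazur-type argument, choosing for each element of a countable dense subset of $S_N$ a near-norming vector of $S_X$, exactly as in the proof of Lemma \ref{codim}). Every separable Banach space embeds in one with a basis — but we want $Y$ itself to have a basis; here I would instead invoke that a \emph{reflexive} separable space has a further subspace, or rather use the standard fact that we may take $Y$ to have a Schauder basis by passing to a suitable subspace, OR more cleanly: build $Y$ directly with a basis by interleaving the above exhaustion with a basic-sequence selection (a gliding-hump / Mazur construction) so that the chosen vectors form a basic sequence. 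The restriction map $X^*\to Y^*$ sends $\overline{D_\beta}^{w^*}$ into $B_{Y^*}$ (up to the norming constant) and, because $Y$ norms $N$ well, it does not collapse the $\varepsilon$-separations we built in; hence $s_{\varepsilon'}^\beta B_{Y^*}\supseteq$ (image of $D_\beta$) for a slightly smaller $\varepsilon'$, and in particular $s_{\varepsilon'}^\alpha B_{Y^*}\neq\emptyset$, giving $Sz(Y)>\alpha$.

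For part (ii): pass to the dual picture via reflexivity. By Corollary \ref{AUCreflexive}-type reasoning one has $Sz(X)=Sz(X^{**})$ and we want a quotient $X/Z$ with a basis and large Szlenk index; since $(X/Z)^*=Z^\perp\subseteq X^*$, this is the same as finding a separable subspace $W\subseteq X^*$ with a basis (a shrinking basis, so that it is a dual basis), $Sz$ of $W$-as-a-dual... — concretely, run the same exhaustion in $X^*$, produce a countable $W\subseteq X^*$ with a basis carrying the $\varepsilon$-separated derivation tree, let $Z=W_\perp=\{x\in X:\ w(x)=0\ \forall w\in W\}$, so $(X/Z)^*$ is canonically $\overline{W}^{w^*}=W$ (reflexivity makes $W$ weak$^*$-closed iff norm-closed, and we take it closed), arrange the basis of $W$ to be shrinking (possible after a further subspace since $Sz(W)\le Sz(X)$ is countable, so $W$ is AUS-renormable-adjacent — actually any separable reflexive space has a subspace with a shrinking basis), and conclude $(X/Z)$ has a (boundedly complete, hence after the shrinking choice, two-sided) basis with $Sz(X/Z)=Sz((X/Z)^*{}^*)\ge$ the Szlenk index of $W\ge$ something $>\alpha$. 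Cite Proposition 3.5 of \cite{La1} for the quotient half if a cleaner route is wanted.

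The main obstacle, and the step deserving the most care, is the \textbf{stability of the Szlenk derivations under restriction to the separable subspace/quotient}: one must verify that the weak$^*$ topology of $B_{Y^*}$ (resp. of $B_{(X/Z)^*}$) around the image points is \emph{coarser} in the right way — i.e. that a set which was NOT norm-$\varepsilon$-small in $B_{X^*}$ does not become $\varepsilon$-small after restriction (easy, restriction is norm-nonexpanding and nearly isometric on $N$) AND, more delicately, that the derivations computed inside $B_{Y^*}$ still "see" the separations, which requires that the finitely many coordinate functionals used to define each weak$^*$-neighbourhood at stage $\beta$ were included among the generators of $Y$ (resp. came from $Z^\perp$). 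This is precisely why the construction must be done by simultaneous recursion rather than first choosing $Y$ and then analysing it: the list of needed coordinates is only revealed as the derivation tree is built. Keeping the bookkeeping transfinite-but-countable (using $\alpha<\omega_1$ and the countability of each layer) is the technical heart.
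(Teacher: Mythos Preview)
Your proposal is heading in the right direction---both you and the paper track witnesses to the Szlenk derivations through a countable structure---but your execution is too vague at the crucial points, and where you do commit to details there are confusions. The paper does not run an abstract closing-off recursion over ordinals $\beta\le\alpha$; instead it fixes once and for all a tree $T_\alpha\subset\omega^{<\omega}$ of height $\alpha$ together with an order-compatible bijection $\varphi_\alpha:\omega\to T_\alpha$, and then runs a \emph{single} induction on $n\in\omega$ producing simultaneously functionals $x^*_{\varphi_\alpha(n)}\in s_\varepsilon^{h_\alpha(\varphi_\alpha(n))}(B_{X^*})$ and vectors $x_n\in S_X$ so that: (c) $x_n$ lies in the annihilator of a norming set for $\mathrm{span}\{x_1,\dots,x_{n-1}\}$, which makes $(x_n)$ a $2$-basic sequence by Mazur; (d) $x_n$ witnesses $\|x^*_{\varphi_\alpha(n)}-x^*_{s_n}\|>\varepsilon/6$; and (e) $x^*_{\varphi_\alpha(n)}-x^*_{s_n}$ is $2^{-n}$-small on $x_1,\dots,x_{n-1}$. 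Then $Y=\overline{\mathrm{span}}\,(x_n)$ has a basis by construction; condition (e) forces $y^*_{s^\frown k}\to y^*_s$ weak$^*$ in $Y^*$ along every branch, and condition (d) preserves the separations, so $y^*_\emptyset\in s_{\varepsilon/6}^\alpha(B_{Y^*})$. This is precisely your ``OR more cleanly: interleave with a basic-sequence selection'' option, carried out explicitly; your first formulation (build $N\subset X^*$, then take any separable $Y$ norming $N$) does \emph{not} give $Y$ a basis, as you note, so the interleaving is not an alternative but the whole point, and it needs to be written down.

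For (ii) your detour through shrinking bases is unnecessary: in a reflexive space every basis is automatically shrinking and boundedly complete, so once $W\subset X^*$ has a basis, $X/W_\perp$ does too. The paper takes the even more direct route you allude to at the end (it is indeed Proposition~3.5 of \cite{La1}): make the differences $u_n^*=x^*_{\varphi_\alpha(n)}-x^*_{s_n}$ a basic sequence in $X^*$ via the same Mazur technique, let $F=\overline{\mathrm{span}}\,(x^*_{\varphi_\alpha(n)})$ and $Z=F^\perp$; reflexivity gives $(X/Z)^*\cong F$ with the relative weak$^*$ topology, the $u_n^*$ are weakly null hence weak$^*$ null there, and the tree of $x^*_s$'s witnesses $x^*\in s_{\varepsilon/3}^\alpha(B_{(X/Z)^*})$ directly. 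Your worry about ``stability of derivations under restriction'' is exactly right as the heart of the matter, but it is resolved by the concrete pointwise conditions (d) and (e), not by an abstract bookkeeping of weak$^*$-neighbourhood bases.
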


\begin{proof}\

(i) We will just slightly refine the proof of Proposition 3.1 in \cite{La1}. So let us recall the construction. We first introduce  a family
$(T_\alpha)_{\alpha < \omega_1}$ of well founded trees in $\omega^{< \omega}$ (the set of finite sequences of natural numbers) constructed inductively as follows:\\
$T_0 = \{\emptyset\}$\\
$T_{\alpha+1} = \{\emptyset\} \cup \bigcup
\limits_{n=0}^\infty n^\frown T_\alpha$, where $n^\frown
T_\alpha = \{n^\frown s, s \in T_\alpha\}$.\\
$T_\alpha = \{\emptyset\} \cup \bigcup
\limits_{n=0}^\infty n^\frown T_{\alpha_n}$, if $\alpha$
is a limit ordinal and $(\alpha_n)_{n=0}^\infty$ an enumeration of $[0,\alpha)$.\\
The trees $T_\alpha$ are equipped with their natural order: $(m_1,..,m_l)\le (n_1,..,n_k)$ if $l\le k$ and $m_i=n_i$ for $i\le l$.\\
Note that the height of $T_\alpha$ is $ht(T_\alpha) = \alpha$. Moreover, if for $s$ in $T_\alpha$ we denote $T_\alpha (s) = \{t \in
\omega^{< \omega},\ s^\frown t \in T_\alpha\}$ and $h_\alpha (s) =
ht(T_\alpha(s))$, we have that $T_\alpha (s) =
T_{{h_\alpha} (s)}$.\\
Recall also (see \cite{La1}) that for any $1 \leq \alpha < \omega_1$,
there exists a bijection $\ph_\alpha : \omega \to
T_\alpha$ such that for any $s,s'$ in $T_\alpha,~s < s'$ implies
$\ph_\alpha^{-1}(s) < \ph_\alpha^{-1}(s')$.

Assume now that $Sz(X)>\alpha$. So there exist $\eps>0$ and $x^*\in S_\eps^\alpha(B_{X^*})$. Pick a sequence $(\eps_n)_{n=1}^\infty$ such that $\prod_{n=1}^\infty (1+\eps_n)\le 2$. We build, by induction on $n$, $(x_{{\ph_\alpha}(n)}^*)_{n=0}^\infty$ in $B_{X^*}$,  $(x_n)_{n=1}^\infty$ in $S_X$, a finite $\eps_n$-net $F_n$ of the unit ball of the linear span of $\{x_1,..,x_n\}$ and $G_n$ a finite subset of $S_{X^*}$ which is norming for $F_n$ so that:\\
\smallskip
(a) $x_{{\ph_\alpha}(0)}^* = x_\emptyset^* = x^*$.\\
\smallskip
(b) $\forall n \in \omega\ \ x_{{\ph_\alpha}(n)}^* \in
s_\eps^{h_\alpha(\ph_\alpha(n))}(B_{X^*})$\\
\smallskip
(c) $\forall n \geq 1$, $x_n\in G_{n-1}^\perp$.\\
\smallskip
(d) $\forall n \geq 1\ \ (x_{{\ph_\alpha}(n)}^* - x_{s_n}^*)
(x_n) > \frac{\eps}{6}$, where $s_n$ is the predecessor of $\ph_\alpha (n)$.\\
\smallskip
(e) $\forall n \geq 2\ \ \forall 1 \leq k \leq
n-1\ \ |(x_{{\ph_\alpha}(n)}^* - x_{s_n}^*)(x_k)| \leq 2^{-n}$.
\smallskip

Set $x_{{\ph_\alpha}(0)}^* = x_\emptyset^* = x^*$. Let now $n\ge 1$ and assume that $x_{{\ph_\alpha}(k)}^*$, $x_k$, $F_k$ and $G_k$ have been constructed for $0 \leq k \leq n-1$  and satisty a)...e). There exists $i_n < n$ such that $\ph_\alpha(n) =
\ph_\alpha(i_n)^\frown k_n$, with $k_n \in \Ndb$. By induction hypothesis
$$x_{{\ph_\alpha}(i_n)}^* \in s_\eps^{h_\alpha(\ph_\alpha(i_n))}(B_{X^*})\subset s_\eps^{h_\alpha(\ph_\alpha(n))+1}(B_{X^*}).$$
Since $X$ is reflexive, we can apply Lemma \ref{codim} and the above inclusion to find $x_{{\ph_\alpha}(n)}^*$ and $x_n\in S_{G_{n-1}^\perp}$ such that (d) and (e) are satisfied. Then we pick an $\eps_n$-net $F_n$ of the unit ball of the linear span of $\{x_1,..,x_n\}$ and $G_n$ a finite subset of $S_{X^*}$ which is 1-norming for $F_n$.  Our inductive construction  is now finished.

Note that, by the standard Mazur technique, our condition (c)  implies that $(x_n)_{n=1}^\infty$ is a 2-basic sequence. Let $Y$ be the closed linear span of $\{x_n,\ n\ge 1\}$. For $s\in T_\alpha$, we denote $y^*_s$ the restriction of $x^*_s$ to $Y$. Condition (e) implies that for any $s\in T_\alpha'$ (the set of sequences in $T_\alpha$ having a successor in $T_\alpha$), the sequence $(y^*_{s\frown n})_n$ is weak$^*$-converging to $y^*_s$ in $Y^*$. Condition (d) insures that for all $s\in T_\alpha'$ and all $n\in \Ndb$, $\|y_{s\frown n}^*-y^*_s\|_{Y^*}\ge \frac{\eps}{6}$. We deduce that $y^*_\emptyset \in s_{\eps/6}^\alpha(B_{Y^*})$ and that $Sz(Y)>\alpha$.

\medskip (ii) This is a particular case of Proposition 3.5 in \cite{La1}. The reflexive case being easier, let us just point for the sake of completeness that in the above construction, we only pick $x_n\in S_X$ such that $(x_{{\ph_\alpha}(n)}^* - x_{s_n}^*)
(x_n) > \frac{\eps}{3}$. If we denote $u_n^*=x_{{\ph_\alpha}(n)}^* - x_{s_n}^*$, still using the standard Mazur gliding hump technique, we can insure that $(u_n^*)$ is a basic sequence. Since $X$ is reflexive, we have that $(u_n^*)$ converges weakly to 0 in $X^*$. Let now $F$ be the closed linear span of $(x_{{\ph_\alpha}(n)}^*)_{n=0}^\infty$ and $Z=F^\perp \subset X$. We still have that $(u_n^*)$ converges to 0 weakly in $F$, i.e. weak$^*$ in $(X/Z)^*$. It then follows from the construction of $(u_n^*)$ that $x^*\in s_{\frac{\eps}{3}}^\alpha (B_{(X/Z)^*})$ and $Sz(X/Z)>\alpha$.

\end{proof}

\section{Main result}

We start this section by recalling the know results on reflexive spaces having an equivalent norm which is AUS and AUC. It was shown in \cite{Ku}, with a different terminology, that if a norm of a reflexive Banach space is AUS and AUC then it has property $(\beta)$. We will need the following quantitative version of this result (see Theorem 5.2 in \cite{DKRRZ}).

\begin{Thm}\label{betapower} Let $X$ be a reflexive Banach space. Assume that $\overline{\rho}_{X}(t)\le at^p$ and $\overline{\delta}_{X}(t)\ge bt^q$ for some $a,b>0$ and $p,q \in (1,\infty)$ and all $t\le 1$, then there exits $c>0$ such that $\overline{\beta}_{X}(t)\ge ct^{\frac{pq-p}{p-1}}$, for $t\le 1$.
\end{Thm}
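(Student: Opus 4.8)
The plan is to fix a $t$-separated sequence $(x_n)$ in $B_X$ and a point $x\in B_X$, and to find $n$ with $\|x-x_n\|/2\le 1-c t^{(pq-p)/(p-1)}$. The natural strategy is to combine three ingredients: (1) the AUS estimate $\overline{\rho}_X(s)\le as^p$, which controls $\|x+sy\|$ when $y$ lives in a finite-codimensional subspace; (2) the AUC estimate $\overline{\delta}_X(s)\ge bs^q$, which forces growth of the norm in every finite-codimensional direction; and (3) the fact that a $t$-separated bounded sequence in a reflexive space (recall $X$ is reflexive here) must, after passing to a subsequence, look asymptotically like $z+w_n$ where $w_n$ is ``spread out'' — more precisely, for any finite-codimensional $Y$ and any $\eta>0$ one can find infinitely many $n$ with $\mathrm{dist}(x_n-x_m, Y)$ small for appropriate $m$, or better, one can extract a subsequence that is an asymptotic perturbation of a weakly null sequence. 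The weak limit $z=\mathrm{w}\text{-}\lim x_n$ exists along a subsequence by reflexivity; write $x_n = z + u_n$ with $u_n$ weakly null. Separation of $(x_n)$ gives $\liminf_{n\ne m}\|u_n-u_m\|\ge t$, and by a standard gliding-hump / Mazur argument we may assume $\|u_n-u_m\|$ is essentially realized ``at infinity,'' i.e. $u_n-u_m$ is, up to small error, supported in a prescribed finite-codimensional subspace.

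The core computation goes as follows. Consider $\|x-x_n\| = \|(x-z) - u_n\|$. I would like to say: either $\|x-z\|$ is already safely below $2(1-ct^{\theta})$ with $\theta=(pq-p)/(p-1)$, in which case for large $n$ (since $u_n\to 0$ weakly, but not in norm — so this case needs care) we must instead exploit that $u_n$ does not go to zero in norm. The honest route is the one used in the proof of Theorem~5.2 in \cite{DKRRZ}: set $v = x - z$ and note $\|v\|\le 2$. For two indices $n\ne m$, consider $\|x - x_n\|$ and $\|x-x_m\|$; their ``average behaviour'' is governed by $v - u_n$ and $v-u_m$, whose difference $u_m - u_n$ has norm $\gtrsim t$ and is asymptotically in a finite-codimensional subspace of $X$. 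Applying the AUC modulus to the midpoint $\tfrac12((v-u_n)+(v-u_m)) = v - \tfrac12(u_n+u_m)$ together with the displacement $\pm\tfrac12(u_m-u_n)$ of size $\gtrsim t/2$ in a finite-codimensional direction, one gets that $\max(\|v-u_n\|,\|v-u_m\|)$ exceeds $\|v - \tfrac12(u_n+u_m)\|$ by at least $b(t/2)^q$-type gain, \emph{relative to the size of} $\|v-\tfrac12(u_n+u_m)\|$; iterating/averaging over many indices pushes $\|v - x_{n_0}\|$ up for some $n_0$ unless all the $\|v-x_n\|$ were already small. One then plays this against AUS: the AUS inequality $\overline{\rho}_X(s)\le as^p$ says that adding a small finite-codimensional perturbation to a unit vector costs at most $as^p$, which caps how large $\|v - u_n\|$ can be made when $\|u_n\|$ is comparably small, and also lets us renormalize. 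Balancing the exponents $p$ and $q$ — the gain $\sim bt^q$ from convexity against the ``efficiency loss'' $\sim a s^{p-1}$ per unit displacement from smoothness, optimizing the scale $s$ — is exactly what produces the exponent $\dfrac{pq - p}{p-1} = \dfrac{p(q-1)}{p-1}$.

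The main obstacle, and where I expect to spend the real effort, is the bookkeeping that makes ``$u_n - u_m$ is asymptotically in a finite-codimensional subspace'' precise and uniform: one must choose, simultaneously, a subsequence of $(x_n)$, the finite-codimensional subspaces $Y$ witnessing the AUS and AUC moduli at $x$ (or at the renormalized midpoint), and the error tolerances, so that the AUC lower bound can actually be applied to the vector $u_m - u_n$ and not just to something close to it. In the non-separable / reflexive setting this is a net-and-subnet argument in the spirit of the proof of Proposition~\ref{Young}(b) above, using weak compactness of $B_X$. The quantitative optimization of $s$ is then routine calculus. Since the statement is quoted verbatim from \cite[Theorem 5.2]{DKRRZ}, I would in fact simply cite that proof; the paragraphs above indicate how it runs.
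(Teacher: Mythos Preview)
Your approach matches the paper's: the paper does not prove this theorem but simply cites Theorem~5.2 of \cite{DKRRZ}, exactly as you propose. One small correction: the statement is \emph{not} quoted verbatim from \cite{DKRRZ}; that reference uses the sequential moduli $b_X$ and $d_X$ (defined via weakly null sequences) rather than $\overline{\rho}_X$ and $\overline{\delta}_X$, and the paper explicitly notes that the proof ``can be easily adapted'' to the finite-codimensional-subspace moduli --- a point your sketch already anticipates in its discussion of the gliding-hump bookkeeping.
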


Note that Theorem 5.2 in \cite{DKRRZ} is stated with  indices called  $b_X$ and $d_X$  instead of  $\overline{\rho}_{X}$ and $\overline{\delta}_{X}$ respectively, where
$$b_X(t)=\sup\{\limsup_{n\to \infty} \|x+tx_n\|-1\}\ \ {\rm and} \ \ d_X(t)=\inf\{\liminf_{n\to \infty} \|x+tx_n\|-1\},$$
the above $\sup$ and $\inf$ being taken over all weakly null sequences in $B_X$. The proof of Theorem 5.2 in \cite{DKRRZ} can be easily adapted to the moduli $\overline{\rho}_{X}$ and $\overline{\delta}_{X}$.

\medskip

We will now prove our main result, which gathers all the linear characterizations of the Banach spaces admitting an equivalent norm with property $(\beta)$. Before to state it, let us recall that a blocking of a basic sequence $(e_n)_{n=1}^\infty$ in a Banach space $X$ is a sequence of subspaces $(X_k)_{k=1}^\infty$ of $X$ such that for any $k\ge 1$, $X_k$ is the linear span of $\{e_{n_k},..,e_{n_{k+1}-1}\}$, where $n_1=1$ and $(n_k)_{k=1}^\infty$ is an increasing sequence in $\Ndb$. For $C\ge 1$ and $p,q\in (1,\infty)$, we say that such a blocking satisfies $(C,p,q)$ estimates if for all $x_1,..,x_m$ in the linear span of $(e_n)$ with consecutive disjoint supports with respect to the blocking $(X_k)$:
$$\frac{1}{C}\big(\sum_{i=1}^m \|x_i\|^p\big)^{1/p} \le \big\|\sum_{i=1}^m x_i\big\| \le C \big(\sum_{i=1}^m \|x_i\|^q\big)^{1/q}.$$

\begin{Thm}\label{main} Let $X$ be a reflexive Banach space. The following assertions are equivalent.

\smallskip\noindent
(i) $Sz(X)\le \omega$ and $Sz(X^*)\le \omega$.

\smallskip\noindent
(ii) There exists $p\in (1,\infty)$ such that $X$ admits an equivalent AUS norm with power type $p$ and there exists $q\in (1,\infty)$ such that $X$ admits an equivalent AUC norm with power type $q$.

\smallskip\noindent
(iii) There exist $p,q\in (1,\infty)$ such that $X$ admits an equivalent norm which is simultaneously AUS of power type $p$ and AUC of power type $q$.

\smallskip\noindent
(iv) There exists $r\in (1,\infty)$ such that $X$ admits an equivalent norm with property $(\beta)$ with power type $r$.

\smallskip\noindent
(v) There exists $C\ge 1$ and $1<q\le p<\infty$ so that each basic sequence in $X$ has a blocking which admits $(C,p,q)$ estimates.
\end{Thm}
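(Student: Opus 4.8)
The strategy is to prove a cycle of implications, routing everything through the results already assembled in the paper. The implication (ii) $\Rightarrow$ (iii) is the substantive renorming step; (iii) $\Rightarrow$ (iv) is quantitative and follows from Theorem~\ref{betapower}; (iv) $\Rightarrow$ (i) uses that property $(\beta)$ forces a small Szlenk index on both $X$ and $X^*$; (i) $\Rightarrow$ (ii) is exactly Corollary~\ref{AUCreflexive} applied to $X$ and to $X^*$ (using reflexivity to identify $X^{**}=X$); and (iii) $\Leftrightarrow$ (v) is the ``$\ell_p$--$\ell_q$ upper/lower estimates on blockings'' reformulation, which in the reflexive setting can be deduced by combining the basis-determination result Proposition~\ref{basicdetermination} with a James-type blocking argument. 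I would organize the writing as (i) $\Rightarrow$ (ii) $\Rightarrow$ (iii) $\Rightarrow$ (iv) $\Rightarrow$ (i), and then handle (iii) $\Leftrightarrow$ (v) separately.

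For (i) $\Rightarrow$ (ii): since $X$ is reflexive, $Sz(X^*)\le\omega$ gives by Corollary~\ref{AUCreflexive} (applied with the roles of $X$ and $X^*$) an equivalent AUC norm of power type $q$ on $X$; symmetrically $Sz(X)\le\omega$ and $Sz((X^*)^*)=Sz(X)\le\omega$ give via Corollary~\ref{AUCreflexive} an equivalent AUC norm of power type on $X^*$, i.e.\ by Corollary~\ref{duality} an equivalent AUS norm of power type $p$ on $X$. For (ii) $\Rightarrow$ (iii): start from the AUS norm $|\cdot|_1$ with $\overline\rho_{|\cdot|_1}(t)\le at^p$ and the AUC norm $|\cdot|_2$ with $\overline\delta_{|\cdot|_2}(t)\ge bt^q$, and average them appropriately — the standard trick is to take $|x|^2 = |x|_1^2 + |x|_2^2$, or more carefully an $\inf$-convolution / interpolation, and check that the resulting norm inherits an AUS estimate of some power type from $|\cdot|_1$ and an AUC estimate of some (possibly different) power type from $|\cdot|_2$; here one uses that $\overline\rho$ is (sub)additive-controlled under sums of norms and likewise for $\overline\delta$ from below, together with a Young/power-type bookkeeping as in Corollary~\ref{Young2}. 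Then (iii) $\Rightarrow$ (iv) is immediate from Theorem~\ref{betapower}, which hands back $\overline\beta_X(t)\ge ct^r$ with $r=(pq-p)/(p-1)$.

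For (iv) $\Rightarrow$ (i): property $(\beta)$ implies that the norm is AUC (indeed $\overline\delta_X(t)>0$ for all $t$, by comparing a $t$-separated sequence in $S_X$ with a point and using the defining inequality), so $Sz(X^*)\le\omega$ by Corollary~\ref{AUCreflexive}(c)$\Rightarrow$(a) provided one upgrades to a power-type AUC modulus, which is exactly what the quantitative hypothesis $\overline\beta_X(t)\ge ct^r$ supplies. For the other half, one shows that property $(\beta)$ of a reflexive space is inherited by $X^*$ in a suitable dual form: a $(\beta)$ estimate on $X$ yields a w$^*$-AUC-type estimate on $X$ and, by Corollary~\ref{duality}, AUS on $X$ — hence $Sz(X)\le\omega$. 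Concretely, one knows that $(\beta)$ implies both AUC and a dual ``$(\beta^*)$'' property; the clean route is: $(\beta)$ with power type $\Rightarrow$ $X$ is AUC with power type $\Rightarrow Sz(X^*)\le\omega$, and $(\beta)$ with power type $\Rightarrow X^*$ is AUC (equivalently $X$ is AUS) with power type $\Rightarrow Sz(X)\le\omega$, both via Corollary~\ref{AUCreflexive}. Finally, for (iii) $\Leftrightarrow$ (v): the implication (iii) $\Rightarrow$ (v) is a direct James-blocking argument — given the AUS/AUC power-type norm, any normalized basic sequence can be blocked into long enough blocks on which the asymptotic upper estimate $\overline\rho(t)\le at^p$ becomes a genuine $\ell_q$-upper estimate for blocks with disjoint supports (gliding hump, discretize using $\eps_n$-nets as in Proposition~\ref{basicdetermination}) and dually the $\ell_p$-lower estimate; for (v) $\Rightarrow$ (iii) one uses Proposition~\ref{basicdetermination}: if $Sz(X)>\omega$ there is a subspace with a basis and large Szlenk index, hence (by Theorem~\ref{AUS}, contrapositive) no power-type AUS norm and a fortiori no blocking with $\ell_q$-upper estimates, contradicting (v); symmetrically for $Sz(X^*)$ using the quotient part (ii) of Proposition~\ref{basicdetermination}; so (v) forces (i), which we have already shown gives (iii).

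\medskip\noindent\textbf{Main obstacle.} The delicate point is (ii) $\Rightarrow$ (iii): producing a \emph{single} equivalent norm that is simultaneously AUS of some power type and AUC of some power type, starting from two a priori unrelated renormings. The naive sum-of-squares norm does inherit an AUC lower bound from $|\cdot|_2$ and an AUS upper bound from $|\cdot|_1$, but one must verify the moduli of the new norm are controlled by those of the summands with only a loss of constants and exponents — this requires a careful Young-duality bookkeeping (Corollary~\ref{Young2}) plus the observation that for a sum of two norms $\overline\rho$ is bounded above by a constant times the larger of the two $\overline\rho$'s and $\overline\delta$ is bounded below by a constant times the smaller of the two $\overline\delta$'s on the relevant subspaces. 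The secondary technical nuisance is the bookkeeping in (iii) $\Rightarrow$ (v): making the asymptotic (finite-codimensional) estimates uniform enough along a single increasing sequence $(n_k)$ to get honest $\ell_p$/$\ell_q$ estimates on \emph{all} disjointly supported blocks simultaneously, which is handled by the usual net-refinement/diagonal argument already used in Proposition~\ref{basicdetermination}.
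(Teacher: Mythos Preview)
Your overall architecture matches the paper's: cycle through (i)$\Rightarrow$(ii)$\Rightarrow$(iii)$\Rightarrow$(iv)$\Rightarrow$(i), with (v) handled via Proposition~\ref{basicdetermination} and Prus's theorem. The treatments of (i)$\Leftrightarrow$(ii), (iii)$\Rightarrow$(iv), and (iii)$\Leftrightarrow$(v) are essentially the paper's (the paper invokes Theorem~4.3 of \cite{Pr} for both directions of (iii)$\Leftrightarrow$(v) rather than redoing a James blocking, but your sketch amounts to the same thing).

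There are, however, two genuine gaps.

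\textbf{(iv)$\Rightarrow$(i).} Your argument asserts that a norm with property $(\beta)$ of power type is itself AUS of power type (``a $(\beta)$ estimate on $X$ yields a w$^*$-AUC-type estimate \dots\ hence AUS on $X$''). This is not true for a fixed norm: $(\beta)$ is a convexity-type condition and does imply AUC, but it does \emph{not} force the same norm to be AUS. So you cannot feed the $(\beta)$ norm directly into Corollary~\ref{AUCreflexive} to get $Sz(X)\le\omega$. The paper takes a completely different route: property $(\beta)$ passes to every separable subspace and every separable quotient; in the separable case one already knows (Theorem~6.3 in \cite{DKLR}) that $(\beta)$ forces both Szlenk indices to be at most $\omega$; then Proposition~\ref{basicdetermination} lifts this to $X$ and $X^*$. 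Your plan needs this separable-determination step, not a direct ``$(\beta)\Rightarrow$ AUS'' claim.

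\textbf{(ii)$\Rightarrow$(iii).} You correctly flag this as the main obstacle, but the proposed fix (``take $|x|^2=|x|_1^2+|x|_2^2$'') does not work: for AUC you need a lower bound on $|x+y|$, but along the finite-codimensional subspace that witnesses AUC for $|\cdot|_2$ you have no control preventing $|x+y|_1$ from dropping well below $|x|_1$, so the AUC estimate need not survive the sum. The paper's Lemma~\ref{powertypes} uses a genuinely more delicate Asplund/John--Zizler averaging: one sets $\|x^*\|_n^*=N^*(x^*)+n^{-1}M^*(x^*)$ on the dual side, takes the predual norms $\|\cdot\|_n$, and then defines $|x|=\sum_{n\ge1} n^{-3}\|x\|_n$. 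The parameter $n$ is then chosen adaptively (depending on $t$) in the estimates for $\overline\rho_{|\,\cdot\,|}$ and $\overline\delta_{|\,\cdot\,|}$, which is what rescues both power types simultaneously (at the cost of worsening the AUC exponent from $q$ to $4q$). Your ``Young-duality bookkeeping'' remark points in the right direction for the AUS half, but the AUC half needs this infinite-family averaging idea, which is absent from the proposal.
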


The main implication to prove is $(ii) \Rightarrow (iii)$. Let us first state and prove separately a more precise statement.

\begin{Lem}\label{powertypes} Let $X$ be a reflexive Banach space. Assume that there exist $p,q\in (1,\infty)$ such that $X$ admits an equivalent AUS norm with power type $p$ and an equivalent AUC norm with power type $q$. Then $X$ admits an equivalent norm which is simultaneously AUS of power type $p$ and AUC of power type $4q$.
\end{Lem}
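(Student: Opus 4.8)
The plan is to renorm $X$ by combining, via an infimal-convolution or averaging procedure, an equivalent AUS norm $\|\cdot\|_s$ of power type $p$ with an equivalent AUC norm $\|\cdot\|_c$ of power type $q$, in such a way that both asymptotic moduli survive. The natural candidate is a norm of the form
$$
|x|^r = \inf\Big\{ \|y\|_s^r + \|z\|_c^r \ : \ x = y+z\Big\}
$$
for a suitable exponent $r$, or dually the sum norm $\big(\|x\|_s^{r'} + \|x\|_c^{r'}\big)^{1/r'}$ on $X^*$; one of these two operations tends to preserve AUS-type estimates while its dual preserves AUC-type estimates, and by Corollary \ref{duality} asymptotic smoothness of a reflexive space is exactly weak$^*$-asymptotic convexity of the dual, so passing between $X$ and $X^*$ is legitimate. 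First I would record that, after rescaling, we may assume $\|\cdot\|_s \le \|\cdot\| \le \|\cdot\|_c$ up to constants on the original norm, so all three norms are uniformly equivalent; this is needed so that ``asymptotic'' computations (which involve finite-codimensional subspaces and weakly null sequences) can be transferred between them with only multiplicative losses.

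Next I would carry out the two separate modulus estimates for the combined norm. For the AUS side: fix $x$ on the unit sphere of $|\cdot|$ and a finite-codimensional subspace $Y$ on which both $\|\cdot\|_s$ and (the dual estimate feeding) $\|\cdot\|_c$ behave well; writing $x = y+z$ a near-optimal decomposition and testing $|x+ty|$ by the same decomposition $y+(z+ty)$ — or rather perturbing only the $\|\cdot\|_s$-component — one gets $|x+ty|^r \le \|y\|_s^r(1+a t^p)^{\text{const}} + \|z\|_c^r$, and since $\|y\|_s, \|z\|_c \le |x| = 1$ this yields $\overline{\rho}_{|\cdot|}(t) \lesssim t^p$. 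For the AUC side one argues on the dual: the dual norm of $|\cdot|$ is (up to the infimal-convolution/sum duality) $\max$ or $\ell^{r'}$-combination of $\|\cdot\|_s^*$ and $\|\cdot\|_c^*$, the first of which is weak$^*$-AUC of power type $p'$ and the second of which, being the dual of an AUC-power-$q$ norm, is weak$^*$-AUS of power type $q'$ — and here the loss from $q$ to $4q$ will appear, coming from the constant $6$ in Proposition \ref{Young} (a), which converts a smoothness estimate into a convexity estimate with a bounded but nontrivial multiplicative distortion of the variable, and from the convexity inequality for the norm used the same way as in part (ii) of the proof of Proposition \ref{Young}. Tracking these constants and checking they only degrade the exponent from $q$ to $4q$ (and not the exponent $p$) is the bookkeeping heart of the argument.

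The main obstacle I anticipate is precisely this asymmetry: a single infimal-convolution-type renorming does not automatically give power-type estimates on \emph{both} sides with the \emph{same} exponents one started with, because the two defining norms interact through the duality with a genuine (though universally bounded) constant, exactly the factor $6$ in Proposition \ref{Young}. So the delicate point is to choose the combining exponent $r$ and to order the operations (combine on $X$ vs.\ combine on $X^*$, then dualize) so that the good exponent $p$ is the one that passes through cleanly while the unavoidable constant is absorbed into the other exponent, producing $4q$ rather than some worse expression. A secondary technical nuisance is that all the asymptotic moduli are defined via finite-codimensional subspaces (not weakly null sequences), so one must be careful that the decomposition $x = y+z$ can be chosen with $y$ and $z$ simultaneously respecting a common finite-codimensional subspace; reflexivity and a compactness/weak$^*$-limit argument in the spirit of part (b) of the proof of Proposition \ref{Young} handle this, but it has to be said explicitly rather than waved through. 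Once these two points are pinned down, the remaining estimates are routine applications of the power-type hypotheses and the convexity of the norm.
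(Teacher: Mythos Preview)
Your approach differs from the paper's and, as written, has a genuine gap on the AUC side. A single infimal convolution (or its dual sum) preserves only one of the two asymptotic properties. Concretely: if $|\cdot|^r=\inf\{\|y\|_s^r+\|z\|_c^r:x=y+z\}$, the dual norm on $X^*$ is an $\ell^{r'}$-type sum of $\|\cdot\|_s^*$ and $\|\cdot\|_c^*$. The summand $\|\cdot\|_s^*$ is weak$^*$-AUC of power type $p'$, so the sum is weak$^*$-AUC (one summand grows, the other is controlled from below by a subdifferential/Mazur argument), and hence $|\cdot|$ is AUS of power type $p$. But for $|\cdot|$ to be AUC you need the dual sum to be AUS, and this fails in general: $\|\cdot\|_s^*$ is only known to be weak$^*$-AUC, not AUS, and adding an AUS norm $\|\cdot\|_c^*$ to a non-AUS norm does not produce an AUS norm. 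The symmetric choice (sum on $X$, inf-convolve on $X^*$) has the mirror defect. There is no choice of $r$ or ordering of the two operations that fixes this; you need a genuinely different mechanism.

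The paper uses the Asplund averaging technique (after John--Zizler): define on $X^*$ the family $\|x^*\|_n^*=N^*(x^*)+n^{-1}M^*(x^*)$, where $N$ is the AUC norm and $M$ the AUS norm on $X$, let $\|\cdot\|_n$ be the predual norm, and set $|x|=\sum_{n\ge1}n^{-3}\|x\|_n$. Each $\|\cdot\|_n$ is AUS of power type $p$ with constant of order $n$, so a truncate-and-tail argument gives $\overline{\rho}_{|\cdot|}(t)\lesssim t^p$. For AUC, the key is $(1-1/n)N\le\|\cdot\|_n\le N$: for a given $t$ one selects $n_0\sim t^{-q}$ so that $\|\cdot\|_{n_0}$ still inherits a gain of order $t^q$ from the AUC of $N$, while the terms $n<n_0$ are handled by a finite Mazur argument and the terms $n>n_0$ are nonnegative. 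The factor $4q$ arises precisely from $n_0^{-3}\cdot t^q\sim t^{4q}$, i.e.\ from the weight $n^{-3}$ in the averaging, \emph{not} from the constant $6$ in Proposition~\ref{Young} as you suggest. That constant plays no role in the AUC estimate; it only enters the proof (via Corollary~\ref{Young2}) when converting the AUS of $M$ into weak$^*$-AUC of $M^*$, which is used on the AUS side.
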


\begin{proof} We shall adapt the proof of Proposition IV.5.2 in \cite{DGZ}, which is due to John and Zizler \cite{JZ}. The technique is a variant of the so-called Asplund averaging method initiated by E. Asplund in \cite{A}. So assume that there exist $a,b>0$ such that $X$ admits an equivalent norm $N$ satisfying $\overline\delta_N(t)\ge at^q$ for all $t\in (0,1)$ and an equivalent norm $M$ satisfying $\overline\rho_M(t)\le bt^p$ for all $t\in (0,1)$. Assume also, as we may, that for all $x\in X$, $N(x)\le M(x)\le CN(x)$, with $C\ge 1$. Note that Corollary \ref{duality} insures the existence of $c>0$ so that $\overline\delta_{M^*}(t)\ge ct^{q'}$, where $q'$ is the conjugate exponent of $p$. We now define
$$\forall n\ge 1 \ \ \forall x^*\in X^*\ \ \|x^*\|^*_n=N^*(x^*)+n^{-1}M^*(x^*).$$
Then $\|\ \|_n$ is the predual norm of $\|\ \|^*_n$ on $X$ and we set
$$\forall x\in X\ \ |x|=\sum_{n=1}^\infty n^{-3} \|x\|_n.$$

We start with the estimation of $\overline\rho_{|\ |}$. It is easy to check that there exists $c_1>0$ such that for all $n\ge 1$, $\overline\delta_{\|\ \|_n^*}(t)\ge c_1n^{-1}t^{q'}$. Then Corollary \ref{duality} yields the  existence of $b_1>0$ such that $\overline\rho_{\|\ \|_n}(t)\le 2nb_1t^p$.\\
Let now $x\in X$ so that $|x|=1$ and $t\in (0,1)$. Note that there exist $A,B>0$ such that for all $x\in X$ and $n\ge 1$, $A|x|\le \|x\|_n \le B|x|$ and $A|x|\le N(x) \le B|x|$. Therefore $|y|\le t$ implies that $\|y\|_n\|x\|_n^{-1}\le BA^{-1}t$.\\
Pick $n_0\ge 1$ such that $2B\sum_{n>n_0}n^{-3}\le Ct^p$, with $C>0$ that will be made precise later. Then we can find a finite codimensional subspace $E$ of $X$ such that for all $y\in E$ with $|y|\le t$ we have
$$\forall n\le n_0\ \ \|x+y\|_n= \|x\|_n\Big\|\frac{x}{\|x\|_n}+\frac{y}{\|x\|_n}\Big\|_n\le \|x\|_n(1+nb_1B^pA^{-p}t^p)$$
$$\le \|x\|_n+nb_1B^{p+1}A^{-p}t^p.$$
Therefore
$$\sum_{n=1}^{n_0} n^{-3}\|x+y\|_n \le \sum_{n=1}^{n_0} n^{-3}\|x\|_n + t^pb_1B^{p+1}A^{-p}\sum_{n=1}^{n_0} n^{-2}\le 1+Ct^p,$$
where $C=b_1B^{p+1}A^{-p}\sum_{n=1}^\infty n^{-2}$. Note that $\|x+y\|_n\le B(1+t)\le 2B$. Finally, our initial choice of $n_0$ implies that $|x+y|\le 1+2Ct^p$. We have shown that $|\ |$ is AUS with power type $p$.

Let us now turn to the study of $\overline\delta_{|\ |}$. First, note that
$$\forall n\ge 1\ \ \forall x\in X\ \ \big(1-\frac1n\big)N(x)\le \|x\|_n \le N(x).$$
Consider $x\in X$ so that $|x|=1$ and $t\in (0,1)$. Let now $n_0$ be the integer so that $n_0-1\le \alpha t^{-q}\le n_0$, where the choice of the constant $\alpha>0$ will be explained later.  Recall that $|y|\ge t$ implies that for all $n\ge 1$, $\|y\|_n\|x\|_n^{-1}\ge AB^{-1}t$ and $N(y)N(x)^{-1}\ge AB^{-1}t$. So we may pick a finite codimensional subspace $E$ of $X$ such that for all $y\in B_E$, with $|y|\ge t$ we have
\begin{equation}\label{eq1}
N(x+y)\ge N(x)\big(1+\frac{aA^q}{B^q}t^q\big).
\end{equation}
Thus
\begin{equation}\label{eq2}
\|x+y\|_{n_0} = \big(1-\frac{1}{n_0}\big)N(x+y)\ge \big(1-\frac{t^q}{\alpha}\big)\big(1+\frac{aA^q}{B^q}t^q\big)\|x\|_{n_0}
\ge \|x\|_{n_0}+ Dt^q,
\end{equation}
with $D=aA^{q}(2B^q)^{-1}$, if $\alpha$ was initially chosen large enough (depending only on $a,A,B$).\\
We have also that for all $n>n_0$ and all $y\in B_E$ with $|y|\ge t$,
\begin{equation}\label{eq3}
\|x+y\|_{n}\ge (1-\frac1n)N(x+y)\ge \big(1-\frac{t^q}{\alpha}\big)\big(1+\frac{aA^q}{B^q}t^q\big)\|x\|_{n}\ge \|x\|_n.
\end{equation}
Given $(\eps_n)_{n=1}^{n_0-1}$ in $(0,1)$, the standard Mazur technique allows us to choose $E$ such that we also have
$$\forall n<n_0\ \ \forall y\in B_E\ \ \|x+y\|_n\ge \|x\|_n-\eps_n.$$
Therefore, if the $\eps_n$'s are chosen small enough, we get that
\begin{equation}\label{eq4}
\forall n<n_0\ \ \forall y\in B_E\ \ \sum_{n=1}^{n_0-1}n^{-3}\|x+y\|_n \ge \sum_{n=1}^{n_0-1}n^{-3}\|x\|_n-\frac{D}{2n_0^3}t^q.
\end{equation}
Note that $n_0\le \alpha t^{-q}+1 \le t^{-q}(\alpha+1)$. Then, summing equations (\ref{eq2}), (\ref{eq3}) and (\ref{eq4}) we obtain that for all $y\in B_E$ with $|y|\ge t$
$$|x+y|\ge |x|+\frac{D}{2n_0^3}t^q\ge 1+\frac{D}{2(\alpha+1)^3}t^{4q},$$
which concludes our proof.
\end{proof}

\begin{Rem} It follows from Corollary \ref{duality} that we could also build an equivalent norm which is AUC of power type $q$ and AUS with a power type in $(1,p)$. We do not know if it is possible to preserve both power types together. To the best of our knowledge the similar question for uniformly smooth and convex norms is still open.
\end{Rem}

\begin{Rem} It is worth mentioning that one can use a nice Baire category argument to show that if a reflexive Banach space $X$ admits an equivalent AUS norm and an equivalent AUC norm then it admits an equivalent norm which is simultaneously AUS and AUC. However, the very nature of Baire's Lemma makes it quite hopeless to get a quantitative result such as Lemma \ref{powertypes} so easily. Let us nevertheless describe this qualitative argument.
\end{Rem}

\begin{proof} We shall adopt here  the presentation of the Asplund averaging technique, based on Baire's Lemma that can be found in \cite{DGZ} (p. 52-56). Let us denote by $P$ the set of norms on $X$ that are equivalent to the original norm  $\|\ \|$ and let $B$ be the closed unit ball of $\|\ \|$. For $N,M\in P$, we set
$$d(N,M)=\sup\{|N(x)-M(x)|,\ x\in B\}.$$
Then $(P,d)$ is open in the complete metric space of all continuous semi-norms on $(X,\|\ \|)$ equipped with $d$ and therefore a Baire space. \\
Assume first that $\|\ \|$ is AUC. We will the show that $\cal C=\{N\in P,\ N\ {\rm is}\ {\rm AUC}\}$ is a dense $\cal G_\delta$ subset of $P$. For $n\in \Ndb$, we denote $O_n$ the set of all $N$ in $P$ so that there exists $\delta>0$ such that forall $x\in X$ with $N(x)=1$, there exists a finite codimensional subspace $Y$ of $X$ satisfying:
$$\forall y\in Y\ {\rm with}\ N(y)=1,\ N(x+2^{-n}y)>1+\delta.$$
It is not difficult to check that for all $n\in \Ndb$, $O_n$ is an open subset of $P$ and that $\cal C=\bigcap_{n\in \Ndb}O_n$. This shows that $\cal C$ is a $\cal G_\delta$ subset of $P$.\\
It is also an exercise to verify that for any $N\in P$ and any $n\in \Ndb$, the norm $N+2^{-n}\|\ \|$ is AUC. This finishes the proof of the fact that $\cal C$ is a dense $\cal G_\delta$ subset of $P$, whenever it is non empty.\\
Following \cite{DGZ}, we now denote $P^*$ the set of all norm on $X^*$ that are dual to an element of $P$. If $B^*$ is the closed unit ball of the dual norm of $\|\ \|$, we set
$$\forall N^*,M^*\in P^*\ \ d^*(N^*,M^*)=\sup\{|N^*(x^*)-M^*(x^*)|,\ x^*\in B^*\}.$$
It is shown in \cite{DGZ} that the map $N\mapsto N^*$, where $N^*$ is the dual norm of $N$, is a homeomorphism from $(P,d)$ onto $(P^*,d^*)$. Therefore, it follows from our study of $\cal C$ and from Corollary \ref{duality} that the set $\cal S$ of all AUS norms in $P$ is a dense $\cal G_\delta$ subset of $P$, whenever it is non empty.\\
Finally, we use that $(P,d)$ is a Baire space to obtain that $\cal C \cap \cal S$ is non empty whenever $\cal C$ and $\cal S$ are non empty.
\end{proof}

Let us now gather all these results to complete the proof of our main result.

\begin{proof}[Proof of Theorem \ref{main}] The equivalence $(i) \Leftrightarrow (ii)$  follows from Theorem \ref{AUS} and Corollary \ref{AUCreflexive}.

\medskip\noindent $(ii) \Rightarrow (iii)$ This follows from Lemma \ref{powertypes}.

\medskip\noindent $(iii) \Rightarrow (iv)$ This is a direct consequence of Theorem \ref{betapower}.

\medskip\noindent $(iv) \Rightarrow (i)$ Assume that the norm of $X$ has property $(\beta)$. Then, so does every separable subspace or separable quotient of $X$. Thus every separable subspace or quotient of $X$ has a Szlenk index not exceeding $\omega$ (see Theorem 6.3 in \cite{DKLR} and references therein). We can now apply Proposition \ref{basicdetermination}  to deduce that $Sz(X)\le \omega$ and $Sz(X^*)\le \omega$.

\medskip\noindent $(v) \Rightarrow (i)$ Assuming $(v)$, we deduce from Theorem 4.3 in \cite{Pr} (see also Corollary 9 in \cite{Ku}) that every subspace $Y$ of $X$ with a basis admits an equivalent norm which is AUS and AUC and therefore is such that $Sz(Y)\le \omega$ and $Sz(Y^*)\le \omega$. Then, it follows from Proposition \ref{basicdetermination} that  $Sz(X)\le \omega$ and $Sz(X^*)\le \omega$.

\medskip\noindent $(iii)\Rightarrow (v)$ Assume that $\overline\rho_X(t)\le at^p$ and $\overline\delta_X(t)\ge bt^q$  for some $a,b>0$ and $p,q\in (1,\infty)$. Let $(e_n)$ be a basic sequence in $X$ and $Y$ be the closed linear span of the $e_n$'s. Therefore  $\overline\rho_Y(t)\le \overline\rho_X(t) \le at^p$ and $\overline\delta_Y(t)\ge \overline\delta_X(t)\ge bt^q$. Then it follows from Theorem 4.3 in \cite{Pr} that there exists $C\ge 1$ and $1<s\le r<\infty$ so that $(e_n)$ has a blocking which admits $(C,r,s)$ estimates.
A careful reading of the proof shows moreover that $C,r$ and $s$ are controlled by $a,b,p$ and $q$ and therefore can be chosen independently of the basic sequence $(e_n)$.

\end{proof}

\section{Applications to the non linear geometry of Banach spaces.}

Let us first recall the definition of the countably branching hyperbolic tree. For a positive integer $N$, We denote
$T=\bigcup_{i=0}^\infty \Ndb^i$, where $\Ndb^0:=\{\emptyset\}$, the set of all finite sequences of positive integers. For $s\in T$, we denote by $|s|$ the length of $s$.
There is a natural ordering on $T$ defined by $s\le t$ if $t$ extends
$s$. If $s\le t$, we will say that $s$ is an ancestor of $t$. Then we equip
$T$ with the hyperbolic distance $\rho$, which is defined as follows. Let $s$ and $s'$ be two elements of $T$ and let $u\in T$ be their greatest common ancestor. We set
$$\rho(s,s')=|s|+|s'|-2|u|=\rho(s,u)+\rho(s',u).$$
Let $(M,d)$, $(N,\delta)$ be metric spaces. A map $f:M\to N$ is a bi-Lipschitz embedding if there exist constants $A,B>0$ such that
$$\forall x,y\in M\ \ Ad(x,y)\le \delta(f(x),f(y))\le Bd(x,y).$$
We say that $f$ is a coarse Lipschitz embedding if there exist constants $\theta,A,B>0$ such that
$$\forall x,y\in M\ \ d(x,y)\ge \theta \Rightarrow Ad(x,y)\le \delta(f(x),f(y))\le Bd(x,y).$$

It has been proved in \cite{BKL} that for a reflexive Banach space $X$, $Sz(X)\le \omega$ and $Sz(X^*)\le \omega$ if and only if there is no bi-Lipschitz embedding from $(T,\rho)$ into $X$. We also refer to \cite{BZ} for a short, elegant and quantitative proof of the fact that there is no bi-Lipschitz embedding from $(T,\rho)$ into a Banach space with property $(\beta)$.

\medskip

We can now state an easy generalization of Theorem 4.3 in \cite{BKL}

\begin{Cor} Let $Y$ be a Banach space with property $(\beta)$ and assume that a Banach space $X$ coarse Lipschitz embeds into $Y$. Then $X$ admits an equivalent norm with property $(\beta)$.
\end{Cor}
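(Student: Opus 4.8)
The plan is to reduce to separable subspaces of $X$, apply the separable result (Theorem 4.3 in \cite{BKL}), and then lift the conclusion back to $X$ using Proposition \ref{basicdetermination} and Theorem \ref{main}. For the first step, let $f\colon X\to Y$ be the given coarse Lipschitz embedding, with constants $\theta,A,B>0$. I would fix a separable subspace $X_0$ of $X$, a countable dense set $D\subseteq X_0$, and put $Y_0=\overline{\operatorname{span}}\{f(d):d\in D\}$, a separable closed subspace of $Y$. For $x\in X_0$ choose $d_x\in D$ with $\|x-d_x\|<\theta$ and set $g(x)=f(d_x)\in Y_0$. When $\|x-y\|\ge 3\theta$ one has $\|d_x-d_y\|\ge\theta$ and $\|d_x-d_y\|\in[\tfrac13\|x-y\|,\,2\|x-y\|]$, so the coarse Lipschitz inequalities for $f$ transfer to $g$; hence $g$ is a coarse Lipschitz embedding of $X_0$ into $Y_0$. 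Since property $(\beta)$ is inherited by subspaces, $Y_0$ has property $(\beta)$; being separable it is reflexive (\cite{Rolewicz1987too}) and, by the separable part of the characterization (Theorem 6.3 in \cite{DKLR}), satisfies $Sz(Y_0)\le\omega$ and $Sz(Y_0^*)\le\omega$.

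Next I would invoke Theorem 4.3 in \cite{BKL}, which shows that $X_0$ admits an equivalent norm with property $(\beta)$; in particular $X_0$ is reflexive and $Sz(X_0)\le\omega$, $Sz(X_0^*)\le\omega$. Letting $X_0$ run over all separable subspaces of $X$, every separable subspace of $X$ is reflexive, whence $X$ itself is reflexive: a sequence in $B_X$ lies in some separable, hence reflexive, subspace and so has a weakly convergent subsequence, so $B_X$ is weakly sequentially compact and the Eberlein--\v{S}mulian theorem applies.

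It then remains to check that $Sz(X)\le\omega$ and $Sz(X^*)\le\omega$, after which Theorem \ref{main}, implication $(i)\Rightarrow(iv)$, provides an equivalent norm on $X$ with property $(\beta)$. If $Sz(X)>\omega$, Proposition \ref{basicdetermination}(i) produces a subspace of $X$ with a basis, hence a separable subspace, of Szlenk index $>\omega$, contradicting the previous paragraph. If $Sz(X^*)>\omega$, then, $X^*$ being reflexive, Proposition \ref{basicdetermination}(ii) applied to $X^*$ yields a subspace $Z\subseteq X^*$ with $X^*/Z$ having a basis and $Sz(X^*/Z)>\omega$; writing $W=Z^\perp\subseteq X$ one has $X^*/Z\cong W^*$, so $W$ is a separable subspace of $X$ with $Sz(W^*)>\omega$, again a contradiction. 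Hence $Sz(X)\le\omega$ and $Sz(X^*)\le\omega$.

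The step I expect to need the most care is the localization in the first paragraph: a coarse Lipschitz embedding controls distances only at scales at least $\theta$, so the substitution $x\mapsto d_x$ is legitimate only because, once $\|x-y\|$ exceeds a fixed multiple of $\theta$, perturbing the argument by less than $\theta$ changes distances by a bounded factor; at smaller scales $g$ need not approximate $f$ at all, which is harmless. Everything else is bookkeeping on top of Theorem 4.3 in \cite{BKL}, Proposition \ref{basicdetermination}, and Theorem \ref{main}.
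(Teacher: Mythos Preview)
Your argument is correct and follows essentially the same scheme as the paper: reduce to separable subspaces, invoke the known separable results from \cite{BKL}, deduce reflexivity of $X$, and then use Proposition~\ref{basicdetermination} together with Theorem~\ref{main} to conclude. The only cosmetic difference is that the paper, instead of citing Theorem~4.3 of \cite{BKL} as a black box, routes the contradiction through the tree characterization (Corollary~3.4 of \cite{BKL} and \cite{BZ}): from a separable $Z\subseteq X$ with $Sz(Z)>\omega$ or $Sz(Z^*)>\omega$ it embeds $(T,\rho)$ bi-Lipschitzly into $Z$ and hence, after a dilation, into a separable subspace of $Y$, contradicting property $(\beta)$; your more careful localization via $g(x)=f(d_x)$ is a welcome explicitation of a step the paper leaves implicit.
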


\begin{proof} We have that any separable subspace $Z$ of $X$ coarse Lipschitz embeds into $Y$ and therefore into a separable subspace of $Y$ with Szlenk index at most $\omega$. Theorem 4.1 in \cite{BKL} insures that $Z$ is reflexive. Since the reflexivity is separably determined, we deduce that $X$ is reflexive. Assume now that $X$ does not admit an equivalent norm with property $(\beta)$. Then it follows from Theorem \ref{main} that $Sz(X)> \omega$ or $Sz(X^*)> \omega$. We now use Proposition \ref{basicdetermination} to deduce the existence of a separable subspace $Z$ of $X$ such that $Sz(Z)> \omega$ or $Sz(Z^*)> \omega$. Then Corollary 3.4 in \cite{BKL} implies that there exists a bi-Lipschitz embedding $f$ from $(T,\rho)$ into $Z$ and therefore into a separable subspace $E$ of $Y$ (using the fact that $T$ is countable, uniformly discrete and a dilation if necessary). This contradicts the fact that $E$ has property $(\beta)$. To justify this last statement, one could use Corollary 3.4 in \cite{BKL} and Theorem \ref{main}, but it is more natural to use directly Theorem 2.1 in \cite{BZ}.
\end{proof}

We recall that a map $f:M\to N$  is called co-uniformly continuous if for every $\eps >0$, there exists $\delta>0$ such that for every $x\in X$,
$$B(f(x), \delta) \subset f\big(\left(B(x,\eps)\right)\big),$$
where $B(x,\eps)$ denotes the closed ball of center $x$ and radius $\eps$.\\
A map $f:M\to N$ that is both uniformly continuous and co-uniformly continuous  is called a uniform quotient. We can now extend to the non separable setting the following result from \cite{DKR}.

\begin{Thm} Let $X,Y$ be two Banach spaces.\\
Assume that $X$ has property $(\beta)$ and that $f:X\to Y$ is a uniform quotient. Then $Y$ has an equivalent norm with property $(\beta)$.
\end{Thm}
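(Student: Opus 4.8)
The plan is to reduce the non-separable uniform-quotient statement to the separable case treated in \cite{DKR}, exactly as the previous corollary reduced the coarse Lipschitz statement to \cite{BKL}. The engine that makes such a reduction work is Theorem \ref{main}, which characterizes the property-$(\beta)$-renorming class purely in terms of the Szlenk indices $Sz(X)$ and $Sz(X^*)$, together with Proposition \ref{basicdetermination}, which says that if one of these indices exceeds $\omega$ then this defect is already witnessed by a separable subspace or quotient of $X$.

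First I would establish that $Y$ is reflexive. A uniform quotient map $f\colon X\to Y$ is onto (co-uniform continuity forces surjectivity onto the whole space, or at least onto a dense, hence closed, convex cone which is all of $Y$), and restricting attention to separable pieces, every separable subspace $W$ of $Y$ is contained in the image of a separable subspace of $X$ under a uniform quotient map. Since $X$ has property $(\beta)$ it is reflexive, hence so are all its separable subspaces, and the separable case from \cite{DKR} (or the fact that uniform quotients of super-reflexive-type behave well) gives that $W$ embeds appropriately; in any case reflexivity is separably determined, so $Y$ is reflexive. Then I would argue by contradiction: if $Y$ admits no equivalent norm with property $(\beta)$, Theorem \ref{main} gives $Sz(Y)>\omega$ or $Sz(Y^*)>\omega$, and Proposition \ref{basicdetermination} produces a \emph{separable} subspace or quotient $V$ of $Y$ with $Sz(V)>\omega$ or $Sz(V^*)>\omega$, so $V$ has no equivalent property-$(\beta)$ norm.

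The final step is to manufacture a separable uniform quotient from $X$ onto this separable $V$, thereby contradicting the separable result of \cite{DKR}, which says a uniform quotient of a property-$(\beta)$ space has property $(\beta)$. Here one must be careful: a priori the preimage under $f$ of a separable subspace of $Y$ need not be separable, and restricting $f$ to a separable subspace of $X$ need not land in $V$ nor remain co-uniformly continuous. The standard remedy — and this is the part I expect to be the main obstacle — is an exhaustion/closing-off argument: build an increasing sequence of separable subspaces $X_1\subset X_2\subset\cdots$ of $X$ and $V_1\subset V_2\subset\cdots$ of $Y$ so that $f(X_n)$ is $\delta$-dense in $V_{n+1}$ for the moduli witnessing co-uniform continuity and $V_n\subset f(X_{n+1})$ at the relevant scales, then pass to the closures $X_\infty=\overline{\bigcup X_n}$ and $V_\infty=\overline{\bigcup V_n}$; one checks that $f$ maps $X_\infty$ into $V_\infty$ and that this restriction is again a uniform quotient, with $V\subset V_\infty$ (after possibly enlarging $V_\infty$ to contain the original separable $V$). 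Since $X_\infty$ is a separable subspace of $X$ it has property $(\beta)$, so by \cite{DKR} $V_\infty$ has property $(\beta)$, hence so does its subspace... no — property $(\beta)$ does not pass to subspaces, so instead one arranges the exhaustion so that $V_\infty$ itself is the separable space with large Szlenk index (i.e.\ choose $V$ first and build $V_\infty\supseteq V$ with $Sz(V_\infty)\geq Sz(V)>\omega$ using monotonicity of the Szlenk index under subspaces), giving the contradiction directly.

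To summarize the skeleton: (1) show $Y$ is reflexive via separable determinacy and reflexivity of separable subspaces of $X$; (2) assume $Y$ has no equivalent $(\beta)$-norm, so by Theorem \ref{main} one of $Sz(Y),Sz(Y^*)$ exceeds $\omega$; (3) apply Proposition \ref{basicdetermination} to get a separable subspace or quotient $V$ of $Y$ with the same defect; (4) by a closing-off construction produce a separable subspace $X_\infty$ of $X$ whose image under a suitable restriction of $f$ is a separable uniform quotient $V_\infty\supseteq V$, so $Sz(V_\infty)>\omega$ or $Sz(V_\infty^*)>\omega$; (5) since $X_\infty$ is a separable subspace of a property-$(\beta)$ space it has property $(\beta)$, so by the separable theorem of \cite{DKR} $V_\infty$ has property $(\beta)$, contradicting $Sz(V_\infty)>\omega$ or $Sz(V_\infty^*)>\omega$. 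The delicate point is step (4): verifying that the closing-off really yields a genuine uniform quotient between the separable pieces — in particular that co-uniform continuity survives the restriction — which is where the moduli $\delta(\eps)$ from both the uniform and co-uniform continuity of $f$ must be tracked carefully through the inductive construction.
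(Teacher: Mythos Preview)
Your overall architecture---reflexivity via separable determination, then contradiction via Theorem~\ref{main} and Proposition~\ref{basicdetermination}, then the separable result of \cite{DKR}---matches the paper's. But you take a considerably harder road at the key step, and there is one factual slip and one genuine loose end in your sketch.

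The paper avoids your closing-off construction entirely by working only with \emph{linear quotients} of $Y$. If $q\colon Y\to Y/Z$ is a bounded linear surjection, then $q\circ f\colon X\to Y/Z$ is automatically a uniform quotient (composition of a uniform quotient with a Lipschitz, co-Lipschitz map). Hence every separable linear quotient of $Y$ is a uniform quotient of $X$, and \cite{DKR} gives it an equivalent $(\beta)$ norm directly. Reflexivity of $Y$ follows because reflexivity is determined by separable quotients. Then, once $Y$ is reflexive, if $Sz(Y)>\omega$ one uses Proposition~\ref{basicdetermination}(ii) to get a separable quotient $Y/Z$ with $Sz(Y/Z)>\omega$; if $Sz(Y^*)>\omega$ one uses Proposition~\ref{basicdetermination}(i) on $Y^*$ and reflexive duality $(Y/Z)^*=Z^\perp$ to again get a separable quotient of $Y$ with $Sz((Y/Z)^*)>\omega$. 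Either way Theorem~\ref{main} is contradicted. No restriction to a separable piece of $X$ is ever needed.

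Two specific issues in your version. First, your aside that ``property $(\beta)$ does not pass to subspaces'' is wrong: $(\beta)$ \emph{is} hereditary for the induced norm, since a $t$-separated sequence in $B_V\subset B_{V_\infty}$ is already a $t$-separated sequence in the larger ball. Your first instinct there was correct. Second, and more seriously, when Proposition~\ref{basicdetermination} hands you a separable \emph{quotient} $V=Y/Z$ rather than a subspace, the inclusion ``$V\subset V_\infty\subset Y$'' in your step~(4) is meaningless, and your exhaustion argument as written does not cover that case. The fix is precisely to compose with the quotient map $Y\to Y/Z$ first---which is what the paper does from the start, rendering the whole closing-off on $X$ unnecessary.
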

\begin{proof} Under our assumptions, any separable linear quotient of $Y$ is a uniform quotient of $X$. It follows from \cite{DKR} that any separable linear quotient of $Y$ has an equivalent norm with property $(\beta)$. First, since reflexivity is determined by separable quotients and is implied by property $(\beta)$, we obtain that $Y$ is reflexive. Then we can apply Proposition \ref{basicdetermination} together with Theorem \ref{main} to deduce that $Y$ has an equivalent norm with property $(\beta)$.

\end{proof}

\end{document}